\documentclass[12pt,a4paper]{article}
\usepackage{amsmath,amssymb,amsthm,hyperref}
\usepackage{graphicx}
\usepackage{version} 
\usepackage{color}
\usepackage{pstricks-add}
\usepackage[bf]{caption}     
\usepackage{psfrag}          
\usepackage{geometry}        
\usepackage{bm}  
\usepackage[utf8]{inputenc}
  \newtheorem{theorem}{Theorem}[section]

 \newtheorem{lemma}[theorem]{Lemma}

 {\theoremstyle{definition}
 \newtheorem{definition}[theorem]{Definition}
 \newtheorem{remark}[theorem]{Remark}
 
 \newtheorem{question}[theorem]{Question}
 
 }

\DeclareMathOperator{\diam}{diam}
\title{On series of translates of positive functions III}
\author{Zolt\'an Buczolich\thanks{
Research supported by the Hungarian National Research, Development and Innovation Office--NKFIH, Grant  124003. 
},
Department of Analysis, ELTE E\"otv\"os Lor\'and\\
University, P\'azm\'any P\'eter S\'et\'any 1/c, 1117 Budapest, Hungary\\
email: buczo@cs.elte.hu\\
{\tt www.cs.elte.hu/\hbox{$\sim$}buczo}\\
ORCID Id: 0000-0001-5481-8797\\
 \medskip\\
 Bal\'azs Maga\thanks{This author was supported by the \'UNKP-17-2 New National Excellence of the Hungarian Ministry of Human Capacities, and by the Hungarian National Research, Development and Innovation Office–NKFIH, Grant 124003.},
Department of Analysis, ELTE E\"otv\"os Lor\'and\\
University, P\'azm\'any P\'eter S\'et\'any 1/c, 1117 Budapest, Hungary\\
 email: magab@cs.elte.hu \\{\tt www.cs.elte.hu/\hbox{$\sim$}magab}\\
 \medskip\\
and \\
\\ \medskip\\
 G\'asp\'ar V\'ertesy\thanks{This author was supported by the Hungarian National Research, Development and Innovation Office–NKFIH, Grant 124749.
 \newline\indent {\it Mathematics Subject
Classification:} Primary : 28A20, Secondary : 40A05.
\newline\indent {\it Keywords:} almost everywhere convergence, asymptotically dense, Borel--Cantelli lemma.},
 Department of Analysis, ELTE E\"otv\"os Lor\'and\\
University, P\'azm\'any P\'eter S\'et\'any 1/c, 1117 Budapest, Hungary\\
email: vertesy.gaspar@gmail.com\
}
\date{\today}
\begin{document}
\maketitle

\medskip

\medskip
{\em Dedicated to the memory of Jean-Pierre Kahane}

\medskip


\begin{abstract}
 Suppose $\Lambda$ is a discrete infinite set of nonnegative real numbers.
We say that $ {\Lambda}$ is of type 1 if the series $s(x)=\sum_{\lambda\in\Lambda}f(x+\lambda)$
 satisfies a zero-one law. This means that for any non-negative measurable 
 $f: {\ensuremath {\mathbb R}}\to [0,+ {\infty})$ either the convergence set $C(f, {\Lambda})=\{x: s(x)<+ {\infty} \}= {\ensuremath {\mathbb R}}$ modulo sets of Lebesgue zero, or its complement the divergence set $D(f, {\Lambda})=\{x: s(x)=+ {\infty} \}= {\ensuremath {\mathbb R}}$ modulo sets of measure zero.
 If $ {\Lambda}$ is not of type 1 we say that $ {\Lambda}$ is of type 2.
 
 In this paper we show that  there is a universal
 $ {\Lambda}$ with gaps monotone decreasingly converging to zero
 such that for any open subset $G {\subset}  {\ensuremath {\mathbb R}}$ one can find 
 a characteristic function $f_{G}$
 such that $G {\subset} D(f_G, {\Lambda})$ and $C(f_G, {\Lambda})= {\ensuremath {\mathbb R}} {\setminus} G$ modulo sets of measure zero.
 
 We also consider the question whether  $C(f, {\Lambda})$ can contain
 non-degenerate intervals for continuous functions when $D(f, {\Lambda})$
 is of positive measure. 
 
 The above results answer some questions raised in a paper of Z. Buczolich,
 J-P. Kahane, and D. Mauldin.
  \end{abstract}


\section{Introduction}\label{*secintro}
This paper was written for the Kahane memorial volume of Analysis Mathematica.
 We selected a topic 
related to Jean-Pierre Kahane's work and decided to
answer some questions raised in  paper 
\cite{[BKM1]} by Z. Buczolich,
 J-P. Kahane, and D. Mauldin. 

This line of research was started in another  joint paper with Dan Mauldin 
\cite{[BM1]}. In that paper we considered a problem from 1970, originating
from the Diplomarbeit of
 Heinrich von Weizs\"aker \cite{[HW]}.

{\it Suppose $f:(0,+ {\infty})\to {\ensuremath {\mathbb R}}$ is a measurable function.
Is it true that
$\sum_{n=1}^{{\infty}}f(nx)$
either converges (Lebesgue) almost everywhere or diverges almost everywhere, i.e.
is there  a zero-one law for $\sum f(nx)$?}

This question also appeared in a paper of J. A. Haight \cite{[H1]}.

In \cite{[H1]} it was proved that there exists
a set
$H {\subset} (0, {\infty})$ of infinite measure, for which for all
 $x,y\in H,\  x\not=y$
the ratio $x/y$ is not an integer, and furthermore

{\it $(\dagger)$ for all $x>0$
$nx\not\in H$ if $n$ is sufficiently large.}

This implies that if
$f(x)=\chi_{H}(x)$,
the characteristic function of
 $H$ then
$\int_{0}^{{\infty}}f(x)dx= {\infty}$
and $\sum_{n=1}^{{\infty}}f(nx)< {\infty}$
everywhere.
\medskip

Lekkerkerker in \cite{[L]} started to study sets
with property
$(\dagger)$.

In  \cite{[BM1]}
we answered the Haight--Weizs\"aker problem.

\begin{theorem}\label{*HWth} There exists a
measurable function $f:(0,+ {\infty})\to \{0,1\}$ and
two nonempty intervals $I_{F}, \ I_{{\infty}} {\subset} [{1\over 2},1)$
such that for every $x\in I_{{\infty}}$ we have $\sum_{n=1}^{{\infty}}f(nx)=+ {\infty}$
and for almost every $x\in I_{F}$ we have $\sum_{n=1}^{{\infty}}f(nx)<+ {\infty}.$
The function $f$ is the characteristic function of an open set
$E$.\end{theorem}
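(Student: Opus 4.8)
The plan is to produce the open set $E$ directly, as a disjoint union $E=\bigcup_k E_k$ of finitely structured ``blocks'' living at rapidly increasing scales. First I would reformulate: writing $f=\chi_E$, we have $s(x)=\sum_{n=1}^\infty\chi_E(nx)=\#\{n\ge1:nx\in E\}$, so that $s(x)=+\infty$ exactly when $x\in\limsup_n\frac1n E$, where $\frac1n E=\{y/n:y\in E\}$. Thus the two requirements become $I_\infty\subseteq\limsup_n\frac1n E$ and $\big|I_F\cap\limsup_n\tfrac1n E\big|=0$. Choosing $I_\infty,I_F\subset[\tfrac12,1)$ disjoint, I note that for $x$ in either interval the only $n$ with $nx$ in a window around a large scale $N$ lie in a short range $n\approx N/x$; this lets me attach to each relevant $n$ a unique block and makes the counting function $s$ a blockwise sum.

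Next I would fix a fast-growing sequence $N_1<N_2<\cdots$ and pairwise disjoint windows $W_k$ around $N_k$, with $E_k\subset W_k$ a finite union of open intervals (this makes $E$ open). Each block is to be engineered so that (a) $\frac1n E_k$, as $n$ ranges over the finite block range, covers all of $I_\infty$, and (b) the set $\{x\in I_F:nx\in E_k\text{ for some }n\}$ has relative measure at most $\rho_k$ in $I_F$, with $\sum_k\rho_k<\infty$. Granting such blocks, divergence on $I_\infty$ is immediate: every $x\in I_\infty$ is hit by every block, hence by infinitely many $n$, so $s(x)=+\infty$. Convergence a.e.\ on $I_F$ then follows from the first Borel--Cantelli lemma applied at the level of blocks: since $\sum_k|\{x\in I_F:\text{block }k\text{ hits }x\}|\le\sum_k\rho_k|I_F|<\infty$, almost every $x\in I_F$ is hit by only finitely many blocks, and as each block contributes only finitely many $n$ while the windows are disjoint (so the contributions simply add), one gets $s(x)<+\infty$ for a.e.\ $x\in I_F$.

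The main obstacle is precisely the construction of one block with the asymmetric property (a)--(b): the same set $E_k$ must be \emph{fully} covering after dilation on $I_\infty$ yet have \emph{vanishingly small} footprint on $I_F$. A naive choice such as a single long interval fails, because an interval placed far out whose dilates sweep across and cover $I_\infty$ will, through a disjoint but equally effective range of $n$, also sweep across and cover $I_F$; moreover the total dilated measure landing on $I_\infty$ and on $I_F$ scales identically with the length of $E_k$, so no crude measure bound separates the two. The asymmetry must therefore be arithmetic: I would choose the endpoints of $I_\infty,I_F$ and the positions of the short intervals composing $E_k$ so that, for the block's range of $n$, the dilates interleave to tile $I_\infty$ while \emph{piling up} on a small subinterval of $I_F$, the overlap being forced by the spacing of the values $\frac1n$ against the chosen positions. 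Driving the covered fraction $\rho_k$ of $I_F$ to zero summably, while keeping the cover of $I_\infty$ intact at every scale, is the quantitative heart of the argument and is where essentially all the work lies.

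Finally I would record the routine points: each $E_k$ being a finite union of open intervals makes $E$ open, as required; checking that the block ranges of $n$ are genuinely disjoint (so the blockwise sum is legitimate) uses only that $N_k\to\infty$ fast enough; and $I_\infty,I_F$ are the two intervals produced by the arithmetic choice above. Assembling these yields $f=\chi_E$ with $s\equiv+\infty$ on $I_\infty$ and $s<+\infty$ a.e.\ on $I_F$.
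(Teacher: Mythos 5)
Your reduction scheme is sound as far as it goes: confining each block $E_k$ to pairwise disjoint windows $W_k$ guarantees that for fixed $x$ each $n$ contributes to at most one block, so hits coming from different blocks are distinct terms of the series; then your property (a) gives $s\equiv+\infty$ on $I_\infty$, and property (b) together with the first Borel--Cantelli lemma and the finiteness of each block's contribution gives $s<+\infty$ a.e.\ on $I_F$. For the record, the present paper does not prove Theorem~\ref{*HWth} at all: it quotes it from \cite{[BM1]}, and the construction carried out there has essentially this block-plus-Borel--Cantelli architecture, so your skeleton is the right one.

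The genuine gap is the one you yourself flag and then walk past: the existence of a single block $E_k\subset W_k$ whose dilates $\frac1n E_k$, for $n$ in the block range, cover \emph{all} of $I_\infty$ while meeting $I_F$ in relative measure $\rho_k$ with $\sum_k\rho_k<\infty$, is the entire mathematical content of the theorem, and nothing in your proposal establishes it. Saying that ``the asymmetry must be arithmetic'' and that the dilates should ``interleave to tile $I_\infty$ while piling up on a small subinterval of $I_F$'' is a restatement of the required property, not a construction: you never identify what feature of the positions of $I_\infty$, $I_F$ and the constituent intervals of $E_k$ could force this behaviour, and, as you correctly observe, no measure-theoretic count can supply it. The constraint is in fact more delicate than your sketch acknowledges: by the remark recalled after Question~\ref{*q1bkm1}, for $\Lambda=\{\log n\}$ the set $C(f,\Lambda)$ either has full measure on ${\mathbb R}$ or contains no interval whatsoever; since here $C$ is not of full measure, the divergence set must stay dense in $I_F$, so the limsup of your block footprints, while summably small in measure, must still meet every subinterval of $I_F$ --- blocks that all ``pile up'' on one fixed small subinterval would prove something false. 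Producing a configuration that is simultaneously this small in measure and this spread out topologically, while tiling $I_\infty$ exactly, is the number-theoretic core of \cite{[BM1]}; as it stands, your proposal reduces the theorem to an unproved lemma of essentially the same depth.
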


Jean-Pierre Kahane was interested in this problem and soon after our paper had become available we started to receive faxes and emails from him.
This cooperation lead to papers \cite{[BKM1]} and \cite{[BKM2]}.

We considered a more general, additive version of the Haight--Weizs\"aker problem.
Since $\sum_{n=1}^{{\infty}}f(nx)=\sum_{n=1}^{{\infty}}f(e^{\log x+\log n})$,
that is using the function $h=f\circ   \exp$ defined on $ {\ensuremath {\mathbb R}}$ and $\Lambda=\{\log n:
n=1,2,... \}$   we were interested in almost everywhere
convergence questions of the series
$\sum_{{\lambda}\in {\Lambda}}h(x+ {\lambda})$.

Taking more general sets than $\Lambda=\{\log n:
n=1,2,... \}$  was also motivated by a paper, \cite{[H2]} of Haight.
He proved, using the original multiplicative notation of our problem that
if $ {\Lambda} {\subset}[0,+ {\infty})$ is an arbitrary countable set such that
its only accumulation point is $+ {\infty}$ then there exists a measurable set
$E {\subset}(0,+ {\infty})$ of infinite measure such that for all
 $x,y\in E$, $x\not=y$,
$x/y\not\in {\Lambda},$
and for a fixed
$x$ there exist
only finitely many $ {\lambda}\in {\Lambda}$ for which $ {\lambda} x\in
E$. This implies that choosing $f=\chi_{E}$
we have
$\sum_{{\lambda}\in {\Lambda}}f( {\lambda} x)< {\infty}$, but
$\int_{{\ensuremath {\mathbb R}}^+} f(x)dx= {\infty}.$ \medskip

Next we recall from \cite{[BKM1]} the definition of type 1 and type 2 sets.
Given $\Lambda$ an unbounded, infinite discrete set of nonnegative numbers, and
a measurable $f: {\ensuremath {\mathbb R}}\to [0,+ {\infty})$, we consider the sum
$$s(x)=\sum_{\lambda\in\Lambda}f(x+\lambda),$$
and the complementary subsets of $ {\ensuremath {\mathbb R}}$:
$$C=C(f,\Lambda)=\{x: s(x)< {\infty}\},\qquad
D=D(f,\Lambda)=\{x:s(x)= {\infty}\}.$$

\begin{definition}\label{def1} The set $\Lambda$ is of type 1 if, for every $f$, either
$C(f,\Lambda)= {\ensuremath {\mathbb R}}$ a.e. or $C(f,\Lambda)= {\emptyset}$ a.e. (or equivalently
$D(f,\Lambda)= {\emptyset}$ a.e. or $D(f,\Lambda)= {\ensuremath {\mathbb R}}$ a.e.). Otherwise, $\Lambda$
has type 2. \end{definition}

That is for type 1 sets we have a "zero-one" law for the almost everywhere
convergence properties of the series $\sum_{\lambda\in\Lambda}f(x+\lambda)$,
while for type 2 sets the situation is more complicated.

\begin{definition}\label{asydens} The unbounded, infinite  discrete set $\Lambda=\{{\lambda}_{1}, {\lambda}_{2},... \}$, $ {\lambda}_{1}< {\lambda}_{2}<...$ is asymptotically dense if $d_{n}= {\lambda}_{n}- {\lambda}_{n-1}\to 0$, or equivalently:
$$\forall a>0,\quad \lim_{x\to\infty}\#(\Lambda\cap [x,x+a])=\infty.$$
If $d_{n}$ tends to zero monotone decreasingly, we speak about
decreasing gap asymptotically dense sets.

If  $\Lambda$ is not asymptotically dense we say that it is asymptotically lacunary.
\end{definition}

We denote the non-negative continuous functions on $ {\ensuremath {\mathbb R}}$ by $C^{+}( {\ensuremath {\mathbb R}})$,
and if, in addition these functions tend to zero in $+ {\infty}$ they belong to
$C^{+}_{0}( {\ensuremath {\mathbb R}})$.

In \cite{[BKM1]} we gave some necessary and some sufficient conditions
for  a set $ {\Lambda}$ being of type 2. A complete characterization of type 2
sets is still unknown. We recall here 
from \cite{[BKM1]}
the theorem concerning the Haight--Weizs\"aker problem. This contains the additive version of the result of Theorem \ref{*HWth}
with some additional information.

\begin{theorem}\label{*BKMHWth} {The set $\Lambda=\{\log n:
n=1,2,... \}$ has type $2$. Moreover, for some $f\in
C_{0}^{+}( {\ensuremath {\mathbb R}}),$ $C(f, {\Lambda})$ has full measure on the half-line
$(0,\infty)$ and $D(f, {\Lambda})$ contains the half-line $(-\infty,0)$.
If for each $c, \int_c^{+\infty}e^yg(y)dy < +\infty$, then $C(g,\Lambda) =  {\ensuremath {\mathbb R}}$ a.e.
If $g\in C_{0}^{+}( {\ensuremath {\mathbb R}})$ and $C(g,\Lambda)$ is not of the first (Baire) category, then
$C(g, {\Lambda})= {\ensuremath {\mathbb R}}$ a.e. Finally, there is some $g\in C_{0}^+( {\ensuremath {\mathbb R}})$ such that $C(g,\Lambda) =  {\ensuremath {\mathbb R}}$ a.e. and
$\int_0^{+\infty}e^yg(y)dy = +\infty$.
}
\end{theorem}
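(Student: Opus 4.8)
The plan is to deduce the type~2 assertion from the more precise second statement, and to obtain that statement as the additive translate of Theorem~\ref{*HWth}. Writing $u=\log x$ and $f_{\mathrm{add}}=f\circ\exp$ turns $\sum_{n\ge1}f(nx)$ into $\sum_{\lambda\in\Lambda}f_{\mathrm{add}}(u+\lambda)$ with $\Lambda=\{\log n\}$, so Theorem~\ref{*HWth} already produces a characteristic function whose additive convergence and divergence sets each meet a subinterval of $(-\infty,0)$ in positive measure; this alone forces $\Lambda$ to be of type~2. To upgrade this to $D\supseteq(-\infty,0)$ and $C$ of full measure on $(0,\infty)$, and to make $f\in C_0^+(\R)$, I would place the defining open set $E$ as a union of bumps at heights $t_k\to+\infty$ with prescribed widths $\ell_k$ and heights $h_k\to0$, and arrange the combinatorics of the return times $\{x+\log n\}$ so that every $x<0$ lands in $\operatorname{supp} f$ along a non-summable subsequence while almost every $x>0$ lands there only finitely often. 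The main obstacle is exactly this arithmetic dichotomy: the crude count $\#\{n:x+\log n\in[t_k,t_k+\ell_k]\}\approx e^{t_k-x}\ell_k$ differs for $x<0$ and $x>0$ only by the bounded factor $e^{-x}$, so the qualitative gap between ``all $x<0$'' and ``a.e.\ $x>0$'' must be engineered through the fine placement of the bumps (a Lekkerkerker/Borel--Cantelli type arrangement), not through the first-order density.

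For the sufficient condition I would argue by Tonelli. For a bounded interval $[a,b]$,
\[
\int_a^b s(x)\,dx=\sum_{n=1}^\infty\int_{a+\log n}^{b+\log n}g(y)\,dy=\int_{\R}g(y)\,\#\{n:e^{y-b}\le n\le e^{y-a}\}\,dy .
\]
Since $\#\{n:e^{y-b}\le n\le e^{y-a}\}\le e^{y-a}+1$ and the hypothesis $\int_c^{+\infty}e^yg(y)\,dy<+\infty$ (with $c=a$) gives both $\int_a^{+\infty}e^yg<\infty$ and $\int_a^{+\infty}g<\infty$ to absorb the $+1$ and the finitely many small indices, the right-hand side is finite. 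Hence $s<+\infty$ a.e.\ on $[a,b]$; letting $[a,b]\uparrow\R$ gives $C(g,\Lambda)=\R$ a.e.

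For the Baire-category dichotomy I would use that each partial sum $\sum_{n\le N}g(\cdot+\log n)$ is continuous, so $s$ is lower semicontinuous and $C(g,\Lambda)=\{s<\infty\}=\bigcup_{M}\{s\le M\}$ is an increasing union of closed sets. If $C(g,\Lambda)$ is not of the first category, then some $\{s\le M\}$ is non-meager; being closed it must contain an interval $(a,b)$ on which $s\le M$. Then $\int_a^b s\le M(b-a)<\infty$, and reading the displayed identity in reverse, now using $\#\{n:e^{y-b}\le n\le e^{y-a}\}\ge c\,e^y$ for large $y$, forces $\int^{+\infty}e^yg(y)\,dy<\infty$. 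The already-proved sufficient condition then yields $C(g,\Lambda)=\R$ a.e.

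Finally, to show the integral condition is not necessary I would again build $g\in C_0^+(\R)$ from bumps of heights $h_k\to0$ at heights $t_k$ with widths $\ell_k$, now aiming for a \emph{null} divergence set together with $\int^{+\infty}e^yg=\sum_k h_k\ell_k e^{t_k}=+\infty$. Because $\int_0^1 s\asymp\int e^yg$, the function $s$ must be a.e.\ finite but non-integrable, a genuinely delicate regime. A plain Borel--Cantelli bound cannot work here: the preimage sets $A_k=\{x:\exists n,\ x+\log n\in[t_k,t_k+\ell_k]\}$ have $|A_k|\asymp\ell_k e^{t_k}$, and $\sum_k|A_k|<\infty$ together with $h_k\to0$ would force $\int e^yg=\sum_k h_k|A_k|<\infty$. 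I therefore expect to need strongly correlated bumps, e.g.\ positions $t_k$ and widths $\ell_k$ chosen so that the combs $A_k$ are essentially nested, $A_1\supseteq A_2\supseteq\cdots$ with $\bigl|\bigcap_k A_k\bigr|=0$ but $\sum_k h_k|A_k|=+\infty$; then almost every $x$ leaves the $A_k$ after finitely many steps, giving $s(x)<\infty$, while the $e^y$-weighted mass still diverges. Realizing such nesting against the rigid spacing $\approx e^{-t_k}$ of the points $\log n$ is the principal difficulty of this last part.
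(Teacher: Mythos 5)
Your attempt cannot be matched against an internal proof, because the paper does not prove this theorem: it is recalled verbatim from the earlier Buczolich--Kahane--Mauldin paper \cite{[BKM1]}, so your proposal stands or falls on its own. On its own, it is a mixed result. The two soft analytic parts are done correctly and completely: the Tonelli identity $\int_a^b s(x)\,dx=\int_{\R}g(y)\,\#\{n: e^{y-b}\le n\le e^{y-a}\}\,dy$, with the count vanishing for $y<a$ and bounded by $e^{y-a}+1$, does prove the sufficient condition (the hypothesis with $c=a$ absorbs both terms); and the Baire-category statement follows exactly as you say, since lower semicontinuity of $s$ makes each $\{s\le M\}$ closed, a non-meager closed set in $\R$ contains an interval, and the same identity read with the lower bound $\#\{\cdots\}\ge e^y(e^{-a}-e^{-b})-1$ recovers the integral hypothesis for large $c$, hence (by local boundedness of $g$) for every $c$. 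Your reduction of the type~2 claim to Theorem~\ref{*HWth} via $u=\log x$ is also valid: that theorem supplies positive-measure subsets of both $C$ and $D$, which is all that type~2 requires.

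The genuine gaps are in the two constructive claims, and your own text in effect concedes both. First, for the second sentence of the theorem you need $D(f,\Lambda)\supseteq(-\infty,0)$, $C(f,\Lambda)$ of full measure on $(0,\infty)$, and $f\in C_0^+(\R)$. Theorem~\ref{*HWth} gives nothing of this shape: both $I_F$ and $I_\infty$ lie in $[\frac12,1)$, so after taking logarithms the convergence and divergence intervals both sit inside $(-\log 2,0)$, on the \emph{same} side of the origin; the entire content of this part is the global dichotomy across $0$, and your plan to "engineer the fine placement of the bumps" is a statement of the problem, not a solution --- no choice of $t_k,\ell_k,h_k$ is specified, and nothing is proved about why every $x<0$ diverges while almost every $x>0$ converges. (Upgrading a characteristic function to a $C_0^+$ function while keeping divergence at \emph{every} point of $(-\infty,0)$, not merely almost every point, is itself delicate; compare the care the present paper takes with $g_G$ in the proof of Theorem~\ref{*thdflg}.) Second, for the final sentence you correctly diagnose that disjoint, uncorrelated bumps cannot work --- since then $\int e^yg\asymp\sum_k h_k|A_k|$ would be forced finite by any Borel--Cantelli hypothesis --- and you propose essentially nested combs $A_1\supseteq A_2\supseteq\cdots$ with null intersection but $\sum_k h_k|A_k|=+\infty$; but you never exhibit positions compatible with the spacing $\approx e^{-t_k}$ of $\{\log n\}$ that realize this, and you yourself flag it as "the principal difficulty." An accurately identified difficulty is not a proof: as it stands, the first, third and fourth assertions of the theorem are established by your argument, while the second and fifth are not.
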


As $ {\Lambda}$ used in the above theorem is a decreasing gap asymptotically dense
set and quite often it is much easier to construct examples with lacunary $ {\Lambda}$s,
in our paper we try to give examples with a decreasing gap asymptotically dense
$ {\Lambda}$.

One might believe that for type 2 $ {\Lambda}$s $C(f, {\Lambda})$, or $D(f, {\Lambda})$ are always half-lines
if they differ from $ {\ensuremath {\mathbb R}}$. 
Indeed in \cite{[BKM1]} we obtained results in this direction.
A number $t>0$ is called a translator of $ {\Lambda}$ if $( {\Lambda}+t) {\setminus}
 {\Lambda}$ is finite.
Condition $(*)$ is said to be satisfied if $T( {\Lambda})$, the
countable additive
semigroup of translators of $ {\Lambda}$, is dense in $ {\ensuremath {\mathbb R}}^{+}$.
We showed that condition $(*)$ implies that $C(f, {\Lambda})$ is
either $ {\emptyset}$, $ {\ensuremath {\mathbb R}}$,
or a right half-line modulo sets of measure zero. 

In \cite{[BM2]} we showed that this is not always the case.
For a given $ {\alpha}\in(0,1)$ and a sequence of natural numbers $n_{1}<n_{2}<...$ we
put
$ {\Lambda}^{{\alpha}^{k}}:=\cup_{k=1}^{{\infty}} {\Lambda}_{k}^{{\alpha}^{k}},$
$ {\Lambda}_{k}^{{\alpha}^{k}}
= {\alpha}^{k} {\ensuremath {\mathbb Z}}\cap [n_k,n_{k+1})$.

If $ {\alpha}=\frac{1}q$ for some $q\in \{2,3,...  \}$, then a slight
modification of
the proof of Theorem 1 of \cite{[BKM1]} shows that
$ {\Lambda}^{(\frac{1}q)^{k}}$
is of type 1 and condition
$(*)$ is satisfied. 

If $ {\alpha}\not\in  {\ensuremath {\mathbb Q}}$, then one can apply
Theorem 5
of \cite{[BKM1]} to show that $ {\Lambda}^{{\alpha}^{k}}$ is of type 2.

The difficult case is when $ {\alpha}=\frac{p}q$ with
$(p,q)=1,$ $p,q>1$, $p<q$.
In this case we showed that $ {\Lambda}^{{(\frac {p} {q} )}^{k}}$ is of type 2. 
In the cases
$ {\Lambda}^{{(\frac {p} {q} )}^{k}}$, $(p>1)$
condition $(*)$ is not satisfied and  we also
showed in \cite{[BM2]} that there exists a characteristic function $f$ such that
$C(f, {\Lambda})$ does not equal
$ {\emptyset}$, $ {\ensuremath {\mathbb R}}$, or a right half-line modulo sets of measure zero.
This
structure of $C(f, {\Lambda})$ had not been seen before our paper \cite{[BM2]}.

From the point of view of our current paper the following question
(QUESTION 2 in \cite{[BKM1]}) is the most relevant:
 
\begin{question}\label{*q1bkm1} Given open sets $G_1$ and $G_2$ when is it possible
to find $\Lambda$ and $f$ such that $C(f,\Lambda)$ contains $G_1$ and
$D(f,\Lambda)$ contains $G_2$?
\end{question}

It was remarked in \cite{[BKM1]}  that if the counting function
of $\Lambda,\  n(x)=\#\{{\Lambda}\cap [0,x]\}$ satisfies a condition of the type
$$
 \forall   \ell < 0\   \forall a\in  {{{\ensuremath {\mathbb R}}}} \ \ \limsup_{x \to
\infty}{n(x+  \ell+a) - n(x+a) \over n(x+  \ell) - n(x)} < +\infty  
$$
(as is the case for $\Lambda = \{\log n\}$) then either $C(f,\Lambda)$ has full
measure on $ {{{\ensuremath {\mathbb R}}}}$ or $C(f,\Lambda)$ does not contain any interval.

It was also mentioned in \cite{[BKM1]} that if $\Lambda$ is asymptotically lacunary
then it is possible to construct
$f \in C_0^+( {{{\ensuremath {\mathbb R}}}})$ such that both $C(f,\Lambda)$ and $D(f,\Lambda)$ have
interior points.

In this paper we give an almost complete answer to Question \ref{*q1bkm1}.
In Section \ref{*secdflg} we prove Theorem \ref{*thdflg}.
This theorem states that there is a universal decreasing gap asymptotically dense 
 $ {\Lambda}$ 
 such that for any open subset $G {\subset}  {\ensuremath {\mathbb R}}$ one can find 
 a characteristic function $f_{G}$
 such that $G {\subset} D(f_G, {\Lambda})$ and $C(f_G, {\Lambda})= {\ensuremath {\mathbb R}} {\setminus} G$ modulo sets of measure zero. We also show that one can also select a $g_{G}\in C_{0}^{+}( {\ensuremath {\mathbb R}})$
with similar properties. 

In Section \ref{*secsubi} we consider the question of subintervals in $C(f, {\Lambda})$
when $f\in C^{+}_{0}( {\ensuremath {\mathbb R}})$.
In Theorem \ref{*thdivG} we prove that there exists a universal asymptotically dense infinite discrete set $ {\Lambda}$
 such that for any open set $G {\subset}  {\ensuremath {\mathbb R}}$ one can select an
 $f_{G}\in C^{+}_{0}( {\ensuremath {\mathbb R}})$  such that $D(f_{G}, {\Lambda})=G.$ In this case there
 is no exceptional set of measure zero, $D(f_{G}, {\Lambda})$ equals $G$ exactly.
 On the other hand, $ {\Lambda}$ is not of decreasing gap. As Theorem \ref{*thcintl}
 shows it is impossible to find such a universal $ {\Lambda}$ with decreasing gaps.
 In Theorem \ref{*thcintl} we prove that 
 if $\Lambda$ is a decreasing gap asymptotically dense set,
 $f\in C^{+}( {\ensuremath {\mathbb R}})$ and $x$ is an interior point of $C(f,\Lambda)$   then 
$[x,+ {\infty})\cap D(f, {\Lambda})$ is of zero Lebesgue measure.

The example provided in Theorem \ref{*excint} demonstrates that there is a 
decreasing gap asymptotically dense $ {\Lambda}$ and an $f\in C^{+}_{0}( {\ensuremath {\mathbb R}})$ such that
$D(f, {\Lambda})$ and $C(f, {\Lambda})$ both contain interior points. Of course,
as Theorem \ref{*thcintl} shows the interior points of $D(f, {\Lambda})$ are
to the left of those of $C(f, {\Lambda})$.


\section{A universal decreasing gap asymptotically dense $ {\Lambda}$ set}\label{*secdflg}

Let $\mu$ denote the one-dimensional Lebesgue measure.

We denote by
$ {\ensuremath {\mathbb N}} := \{n\in {\ensuremath {\mathbb Z}} : n\ge 1\}$
the set of natural numbers.
For every $A,B\subset {\ensuremath {\mathbb R}}$ we put
$A+B:=\{a+b : a\in A \text{   and   } b\in B\}$ {and}
$A-B:=\{a-b : a\in A \text{   and   } b\in B\}.$

The integer, and fractional parts of $x\in  {\ensuremath {\mathbb R}}$ are denoted by $ {\lfloor} x  {\rfloor} $ and $\{x \}$,
respectively.

\begin{theorem}\label{*thdflg}
There is a strictly monotone increasing unbounded sequence $(\lambda_0,\lambda_1,\ldots)=\Lambda$ in $ {\mathbb {R}}$ such that  $\lambda_{n}-\lambda_{n-1}$ tends to $0$ monotone decreasingly,
that is $ {\Lambda}$ is a decreasing gap asymptotically dense set,
such that for every open set $G\subset {\mathbb {R}}$ there is a function $f_G: {\ensuremath {\mathbb R}}\to [0,+ {\infty})$ for which
\begin{equation}\label{tetelbeli}
\mu\left(\left\{x\notin G :  \sum_{n=0}^\infty  f_G(x+\lambda_n)=\infty\right\}\right)=0, \text{   and   }
 \end{equation}
\begin{equation}\label{*conv}
 \text{   $\sum_{n=0   }^\infty f_G(x+\lambda_n) =\infty$ for every $x\in G$,}
\end{equation} 
moreover
$f_G=\chi_{U_G}$ for a closed set $U_G\subset {\mathbb {R}}$.
By \eqref{tetelbeli} and \eqref{*conv} we have $D(f_G, {\Lambda})\supset G$, and 
$C(f_G, {\Lambda})= {\ensuremath {\mathbb R}} {\setminus} G$ modulo sets
of measure zero. 

One can also select a $g_G\in C_{0}^{+}( {\ensuremath {\mathbb R}})$ satisfying \eqref{tetelbeli} and
\eqref{*conv} instead of $f_G$. 
\end{theorem}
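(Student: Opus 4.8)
The plan is to reduce the whole statement to one elementary \emph{tiling} observation and then to manufacture $\Lambda$ as a concatenation of arithmetic progressions that realizes this tiling at every dyadic scale. The observation is this. Let a block of $\Lambda$ be a finite arithmetic progression $B=\{\alpha+i\delta:0\le i<M\}$ with spacing $\delta$ and span $w=M\delta$, and suppose we place into $U_G$ a single interval $\tilde J=[\alpha+b-\delta,\alpha+b]$ of length exactly $\delta$ in the ``echo region'' of $B$. For $x$ in a bounded range the preimages $\tilde J-(\alpha+i\delta)$, $0\le i<M$, are length-$\delta$ intervals that \emph{tile} $[b-w,b]$ with no gaps or overlaps; hence the contribution of $B$ to $\sum_n\chi_{U_G}(x+\lambda_n)$ equals $1$ for $x\in[a,b]$ with $a=b-w$ and $0$ for $x\notin[a,b]$, apart from at most one echo of each neighbouring progression, which lands in a boundary strip of length $O(\delta)$ near $a$ or $b$. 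Thus a \emph{single} length-$\delta$ interval, read through $B$, produces a hit-set equal to the closed interval $[a,b]$, whose position and (dyadic) length we control freely, with all spillover confined to two nested boundary strips.

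Granting this, I first fix the universal set. Let $\Lambda$ be the concatenation of one finite arithmetic progression for each pair of dyadic parameters, of span $w=2^{-l}$ and spacing $\delta=2^{-l'}$ with $l'\ge l$, listed so that the spacings $\delta$ decrease monotonically to $0$ (possible since each value $l'$ occurs for only the finitely many $l\le l'$), and with each inter-block transition gap set equal to the larger adjacent spacing. Then $\lambda_n-\lambda_{n-1}$ decreases monotonically to $0$, $\Lambda$ is unbounded, for every dyadic length $2^{-l}$ the list contains infinitely many blocks of that span with spacings tending to $0$, and the total of all spacings $\sum_{l'}(l'+1)2^{-l'}$ is finite. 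This finiteness is what will drive the Borel--Cantelli estimate.

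Now let $G$ be open and write $G=\bigcup_r D_r$ with $D_r=(p_r,q_r)\subset G$ open dyadic intervals. Partition the (infinitely many) blocks of each dyadic span among the dyadic intervals of that length so that each $D_r$ receives infinitely many blocks, and on each assigned block place the single length-$\delta$ interval prescribed above so that its hit-set is exactly $[p_r,q_r]$. Let $U_G$ be the union of all these intervals; since they tend to $+\infty$ and are separated, $U_G$ is closed and $f_G=\chi_{U_G}$. Every $x\in G$ lies in some $D_r$ and is hit once by each of the infinitely many blocks coding $D_r$, giving $\sum_n f_G(x+\lambda_n)=\infty$, which is \eqref{*conv}. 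Conversely, a point $x\notin\overline G$ receives no hit at all; the only hits a point outside $G$ can get come from the nested boundary strips near the $p_r,q_r$, and their total measure is dominated by $\sum_{r,k}\delta_k^{(r)}\le\sum_{\text{all blocks}}\delta<\infty$, so by the Borel--Cantelli lemma almost every $x\notin G$ lies in only finitely many of them and $\sum_n f_G(x+\lambda_n)<\infty$, which is \eqref{tetelbeli}.

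The main obstacle is precisely that the decreasing-gap requirement forbids inserting separating gaps between blocks, so the blocks are packed densely far out and a priori the echoes of a far interval would spread over a long range and force divergence everywhere; the entire point of choosing the placed interval of length \emph{exactly} $\delta$ is that the finite echo-grid then tiles one clean interval, pushing all spillover into nested strips that shrink to single points and have summable total measure. The remaining delicate parts are the simultaneous bookkeeping---monotone global gaps together with infinitely many coding blocks of each dyadic span and summable spillover---and the treatment of the $O(1)$ echoes from neighbouring blocks, both of which are absorbed into the one Borel--Cantelli estimate above. For the continuous statement I replace each $\chi_{\tilde J}$ by a flat-topped trapezoidal bump of height $h_k=1/(\text{block index})\to0$, with the blocks assigned to each $D_r$ chosen (e.g.\ by $2$-adic valuation of the index) so that $\sum_{k}h_k=\infty$ along them; then divergence on $G$ survives because these are divergent series, while off $G$ each bump contributes at most $Ch_k$ on a spillover strip, so $\sum_k h_k\,\mu(\text{spill}_k)\le\sum_k\mu(\text{spill}_k)<\infty$ and a weighted Borel--Cantelli argument again gives a.e.\ convergence. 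Since $h_k\to0$ and the supports escape to $+\infty$, the resulting $g_G$ lies in $C_0^{+}(\mathbb{R})$.
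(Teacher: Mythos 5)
Your central ``tiling observation'' contains a fatal error, and it is precisely at the step you flag as the main obstacle. In your $\Lambda$ the gaps decrease monotonically, so \emph{every} element of $\Lambda$ beyond the start of the coding block $B$ (which has spacing $\delta$) lies at distance at most $\delta$ from the next one; hence every interval of length $\delta$ situated to the right of the end of $B$ contains a point of $\Lambda$. Now take any $x\le p_r$, where $[p_r,q_r]$ is the interval coded by $B$, and let $\tilde J=[\alpha+q_r-\delta,\alpha+q_r]$ be the placed interval. Then $\tilde J-x$ is an interval of length $\delta$ lying to the right of the end of $B$, so it meets $\Lambda$; that is, $x\in \tilde J-\Lambda$. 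Thus the echo set of a \emph{single} placed interval is not $[p_r,q_r]$ plus boundary strips of length $O(\delta)$: it contains the entire half-line $(-\infty,p_r]$. The quantitative version of your claim fails the same way: a later block of span $w'$ and spacing $\delta'\le\delta$ contributes about $w'/\delta'$ \emph{overlapping} echoes of $\tilde J$ (each of length $\delta\ge\delta'$), covering a full interval of measure $\approx w'$, and these intervals concatenate leftward without gaps. Consequently, if $G=(0,1)$ say, then every $x\le 0$ is hit at least once by each of the infinitely many placed intervals coding $(0,1)$, so $\sum_n f_G(x+\lambda_n)=\infty$ on a half-line disjoint from $G$, violating \eqref{tetelbeli} on a set of infinite measure. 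Borel--Cantelli cannot rescue this: the spill sets are half-lines, not sets of summable measure. The $C_0^{+}$ variant inherits the same defect, since the flat tops of your bumps have length at least $\delta$ and $\sum_k h_k=\infty$ along the blocks assigned to $D_r$, which again forces divergence on $(-\infty,p_r]$.

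The flaw is structural, not a matter of bookkeeping: with decreasing gaps, \emph{no} target that is a single interval can work, because once the gaps of $\Lambda$ drop below its length its echo set contains a left half-line. The paper's proof is built exactly around avoiding this. There the target $U_{j,k}$ is a \emph{sparse periodic} set (about $E_{j,k}^{-1}$ intervals of tiny length $E_{j,k}^{3}$ with period $E_{j,k}^{2}$); the coding block has spacing $E_{j,k}^{2}-E_{j,k}^{3}$ (period minus plateau), so the phase drift of $E_{j,k}^{3}$ per step sweeps the coverage across all of $I_{j,k}$, which is how \eqref{I_{j,k} fedese} is obtained from a sparse set; the gaps used between coding blocks equal $E_{j,k}^{2}/2$ and the corresponding $\lambda_n$ are kept integer multiples of $E_{j,k}^{2}/2$ (Lemma \ref{useful} and \eqref{*intma}--\eqref{*intmb}), so all spillover from these elements stays inside a periodic set of density $\approx E_{j,k}$, of measure at most $(4j+3)E_{j,k}$ in $[-j,j]$; and the targets sit at super-geometrically growing positions $a_{j,k}$, so every still later block translates $U_{j,k}$ entirely to the left of $-j$, out of the window. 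Sparsity of the target, arithmetic alignment of the tail, and geometric escape from the window are the three mechanisms your single-interval tiling lacks, and they are exactly what makes the Borel--Cantelli sum \eqref{kompl_becsles2} finite.
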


\begin{remark}
Observe that in the above theorem we construct a universal $ {\Lambda}$ and for this
set, depending on our choice of $G$ we can select a suitable $f_{G}$ such that
$D(f_{G}, {\Lambda})=G$ modulo sets of measure zero.
\end{remark}

\begin{proof}
Let 
\[
 {{\cal I}} := \{(j,k) : j\in {\ensuremath {\mathbb N}} \text{   and   } k \in  {\ensuremath {\mathbb Z}}\cap[0,2j\cdot 2^j)\} 
\] 
with the following lexicographical ordering: if $(j,k),(\widetilde{j},\widetilde{k})\in {{\cal I}}$ then
\[
(j,k) <_ {{\cal I}} (\widetilde{j},\widetilde{k}) \Leftrightarrow \big( j<\widetilde{j} \text{   or   } (j=\widetilde{j} \text{   and   } k<\widetilde{k})\big).
\]
Given $(j,k)\in {{\cal I}}$ we define its immediate successor $( {\hat {\jmath}} , {\hat {k}} )$
the following way:
let $ {\hat {\jmath}}  := j$ and $ {\hat {k}}  := k+1$ if $k < 2j\cdot 2^j-1$, and let $ {\hat {\jmath}}  := j+1$ and $ {\hat {k}}  := 0$ if $k = 2j\cdot 2^j-1$. It is clear that starting with $(1,0)$ by repeated application of taking
the immediate successor we can enumerate $ {{\cal I}}$ and hence we will be able to do
induction on $ {{\cal I}}$. We will also introduce the operation of taking the predecessor
of $(j,k)\not=(1,0)$ which will be denoted by $( {\check {\jmath}}, {\check {k}})$
and which is defined by the property $(\hat{{\check {\jmath}}},\hat{{\check {k}}})=(j,k)$.

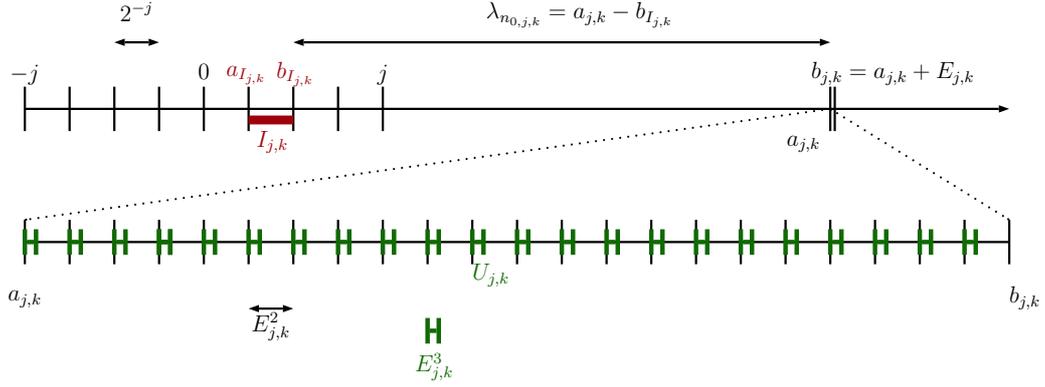
\begin{figure}[h]
\centering{
\resizebox{1.0\textwidth}{!}{
%
\psscalebox{1.0 1.0} 
{
\begin{pspicture}(0,-3.3392189)(19.57,3.3392189)
\definecolor{colour0}{rgb}{0.62352943,0.007843138,0.043137256}
\definecolor{colour5}{rgb}{0.0627451,0.41568628,0.007843138}
\psline[linecolor=black, linewidth=0.04, arrowsize=0.05291667cm 2.0,arrowlength=1.4,arrowinset=0.0]{->}(0.81453127,1.5041211)(18.414532,1.5041211)
\psline[linecolor=black, linewidth=0.04](0.81453127,1.904121)(0.81453127,1.1041211)
\psline[linecolor=black, linewidth=0.04](1.6145313,1.904121)(1.6145313,1.1041211)
\psline[linecolor=black, linewidth=0.04](2.4145312,1.904121)(2.4145312,1.1041211)
\psline[linecolor=black, linewidth=0.04](3.2145312,1.904121)(3.2145312,1.1041211)
\psline[linecolor=black, linewidth=0.04](4.014531,1.904121)(4.014531,1.1041211)
\psline[linecolor=black, linewidth=0.04](4.8145313,1.904121)(4.8145313,1.1041211)
\psline[linecolor=black, linewidth=0.04](5.614531,1.904121)(5.614531,1.1041211)
\psline[linecolor=black, linewidth=0.04](6.414531,1.904121)(6.414531,1.1041211)
\psline[linecolor=black, linewidth=0.04](7.2145314,1.904121)(7.2145314,1.1041211)
\psline[linecolor=black, linewidth=0.04](15.214531,1.904121)(15.214531,1.1041211)
\psline[linecolor=black, linewidth=0.04](15.294531,1.904121)(15.294531,1.1041211)
\psline[linecolor=colour0, linewidth=0.16](4.8145313,1.3041211)(5.614531,1.3041211)
\rput[bl](4.954531,0.7041211){\textcolor{colour0}{$I_{j,k}$}}
\rput[t](0.81453127,2.304121){$-j$}
\rput[t](7.2145314,2.304121){$j$}
\rput[t](4.014531,2.304121){$0$}
\rput[t](14.734531,1.024121){$a_{j,k}$}
\rput[t](16.334532,2.344121){$b_{j,k}=a_{j,k}+E_{j,k}$}
\psline[linecolor=black, linewidth=0.04, linestyle=dotted, dotsep=0.10583334cm, dotsize=0.07055555cm 2.0,dotsize=0.07055555cm 2.0]{cc-cc}(15.214531,1.5041211)(0.81453127,-0.4958789)
\psline[linecolor=black, linewidth=0.04, linestyle=dotted, dotsep=0.10583334cm, dotsize=0.07055555cm 2.0,dotsize=0.07055555cm 2.0]{cc-cc}(15.214531,1.5041211)(18.414532,-0.4958789)
\psline[linecolor=black, linewidth=0.04](0.81453127,-0.8958789)(18.414532,-0.8958789)
\psline[linecolor=black, linewidth=0.04](0.81453127,-0.4958789)(0.81453127,-1.2958789)
\psline[linecolor=black, linewidth=0.04](18.414532,-0.4958789)(18.414532,-1.2958789)
\psline[linecolor=black, linewidth=0.04](1.6145313,-0.4958789)(1.6145313,-1.2958789)
\psline[linecolor=black, linewidth=0.04](2.4145312,-0.4958789)(2.4145312,-1.2958789)
\psline[linecolor=black, linewidth=0.04](3.2145312,-0.4958789)(3.2145312,-1.2958789)
\psline[linecolor=black, linewidth=0.04](4.014531,-0.4958789)(4.014531,-1.2958789)
\psline[linecolor=black, linewidth=0.04](4.8145313,-0.4958789)(4.8145313,-1.2958789)
\psline[linecolor=black, linewidth=0.04](5.614531,-0.4958789)(5.614531,-1.2958789)
\psline[linecolor=black, linewidth=0.04](6.414531,-0.4958789)(6.414531,-1.2958789)
\psline[linecolor=black, linewidth=0.04](7.2145314,-0.4958789)(7.2145314,-1.2958789)
\psline[linecolor=black, linewidth=0.04](8.014531,-0.4958789)(8.014531,-1.2958789)
\psline[linecolor=black, linewidth=0.04](8.814531,-0.4958789)(8.814531,-1.2958789)
\psline[linecolor=black, linewidth=0.04](9.6145315,-0.4958789)(9.6145315,-1.2958789)
\psline[linecolor=black, linewidth=0.04](10.414532,-0.4958789)(10.414532,-1.2958789)
\psline[linecolor=black, linewidth=0.04](11.214531,-0.4958789)(11.214531,-1.2958789)
\psline[linecolor=black, linewidth=0.04](12.014531,-0.4958789)(12.014531,-1.2958789)
\psline[linecolor=black, linewidth=0.04](12.814531,-0.4958789)(12.814531,-1.2958789)
\psline[linecolor=black, linewidth=0.04](13.6145315,-0.4958789)(13.6145315,-1.2958789)
\psline[linecolor=black, linewidth=0.04](14.414532,-0.4958789)(14.414532,-1.2958789)
\psline[linecolor=black, linewidth=0.04](15.214531,-0.4958789)(15.214531,-1.2958789)
\psline[linecolor=black, linewidth=0.04](16.014532,-0.4958789)(16.014532,-1.2958789)
\psline[linecolor=black, linewidth=0.04](16.814531,-0.4958789)(16.814531,-1.2958789)
\psline[linecolor=black, linewidth=0.04](17.61453,-0.4958789)(17.61453,-1.2958789)
\rput[t](18.69453,-1.6958789){$b_{j,k}$}
\rput[t](0.81453127,-1.7758789){$a_{j,k}$}
\rput[t](2.8145313,3.4241211){$2^{-j}$}
\psline[linecolor=black, linewidth=0.04, arrowsize=0.05291667cm 2.0,arrowlength=1.4,arrowinset=0.0]{<->}(2.4145312,2.704121)(3.2145312,2.704121)
\rput[t](5.2145314,-2.175879){$E_{j,k}^2$}
\psline[linecolor=black, linewidth=0.04, arrowsize=0.05291667cm 2.0,arrowlength=1.4,arrowinset=0.0]{<->}(4.8145313,-2.0958788)(5.614531,-2.0958788)
\psline[linecolor=colour5, linewidth=0.08, tbarsize=0.1411111cm 4.0]{|*-|*}(3.2145312,-0.8958789)(3.4145312,-0.8958789)
\psline[linecolor=colour5, linewidth=0.08, tbarsize=0.1411111cm 4.0]{|*-|*}(0.81453127,-0.8958789)(1.0145313,-0.8958789)
\psline[linecolor=colour5, linewidth=0.08, tbarsize=0.1411111cm 4.0]{|*-|*}(1.6145313,-0.8958789)(1.8145312,-0.8958789)
\psline[linecolor=colour5, linewidth=0.08, tbarsize=0.1411111cm 4.0]{|*-|*}(2.4145312,-0.8958789)(2.6145313,-0.8958789)
\psline[linecolor=colour5, linewidth=0.08, tbarsize=0.1411111cm 4.0]{|*-|*}(3.2145312,-0.8958789)(3.4145312,-0.8958789)
\psline[linecolor=colour5, linewidth=0.08, tbarsize=0.1411111cm 4.0]{|*-|*}(4.014531,-0.8958789)(4.2145314,-0.8958789)
\psline[linecolor=colour5, linewidth=0.08, tbarsize=0.1411111cm 4.0]{|*-|*}(4.8145313,-0.8958789)(5.014531,-0.8958789)
\psline[linecolor=colour5, linewidth=0.08, tbarsize=0.1411111cm 4.0]{|*-|*}(5.614531,-0.8958789)(5.8145313,-0.8958789)
\psline[linecolor=colour5, linewidth=0.08, tbarsize=0.1411111cm 4.0]{|*-|*}(6.414531,-0.8958789)(6.614531,-0.8958789)
\psline[linecolor=colour5, linewidth=0.08, tbarsize=0.1411111cm 4.0]{|*-|*}(7.2145314,-0.8958789)(7.414531,-0.8958789)
\psline[linecolor=colour5, linewidth=0.08, tbarsize=0.1411111cm 4.0]{|*-|*}(8.014531,-0.8958789)(8.214531,-0.8958789)
\psline[linecolor=colour5, linewidth=0.08, tbarsize=0.1411111cm 4.0]{|*-|*}(8.814531,-0.8958789)(9.014531,-0.8958789)
\psline[linecolor=colour5, linewidth=0.08, tbarsize=0.1411111cm 4.0]{|*-|*}(9.6145315,-0.8958789)(9.814531,-0.8958789)
\psline[linecolor=colour5, linewidth=0.08, tbarsize=0.1411111cm 4.0]{|*-|*}(10.414532,-0.8958789)(10.6145315,-0.8958789)
\psline[linecolor=colour5, linewidth=0.08, tbarsize=0.1411111cm 4.0]{|*-|*}(11.214531,-0.8958789)(11.414532,-0.8958789)
\psline[linecolor=colour5, linewidth=0.08, tbarsize=0.1411111cm 4.0]{|*-|*}(12.014531,-0.8958789)(12.214531,-0.8958789)
\psline[linecolor=colour5, linewidth=0.08, tbarsize=0.1411111cm 4.0]{|*-|*}(12.814531,-0.8958789)(13.014531,-0.8958789)
\psline[linecolor=colour5, linewidth=0.08, tbarsize=0.1411111cm 4.0]{|*-|*}(13.6145315,-0.8958789)(13.814531,-0.8958789)
\psline[linecolor=colour5, linewidth=0.08, tbarsize=0.1411111cm 4.0]{|*-|*}(14.414532,-0.8958789)(14.6145315,-0.8958789)
\psline[linecolor=colour5, linewidth=0.08, tbarsize=0.1411111cm 4.0]{|*-|*}(15.214531,-0.8958789)(15.414532,-0.8958789)
\psline[linecolor=colour5, linewidth=0.08, tbarsize=0.1411111cm 4.0]{|*-|*}(16.014532,-0.8958789)(16.21453,-0.8958789)
\psline[linecolor=colour5, linewidth=0.08, tbarsize=0.1411111cm 4.0]{|*-|*}(16.814531,-0.8958789)(17.014532,-0.8958789)
\psline[linecolor=colour5, linewidth=0.08, tbarsize=0.1411111cm 4.0]{|*-|*}(17.61453,-0.8958789)(17.814531,-0.8958789)
\rput[bl](8.814531,-1.6958789){\textcolor{colour5}{$U_{j,k}$}}
\psline[linecolor=colour5, linewidth=0.08, tbarsize=0.1411111cm 4.0]{|*-|*}(8.014531,-2.495879)(8.214531,-2.495879)
\rput[t](8.134531,-2.8958788){\textcolor{colour5}{$E_{j,k}^3$}}
\rput[bl](5.3145313,1.904121){\textcolor{colour0}{$b_{I_{j,k}}$}}
\rput[bl](4.414531,1.9441211){\textcolor{colour0}{$a_{I_{j,k}}$}}
\psline[linecolor=black, linewidth=0.04, arrowsize=0.05291667cm 2.0,arrowlength=1.4,arrowinset=0.0]{<->}(5.614531,2.704121)(15.214531,2.704121)
\rput[t](10.734531,3.4241211){$\lambda_{n_{0,j,k}}=a_{j,k}-b_{I_{j,k}}$}
\end{pspicture}
}}}
\caption{Definition of $I_{j,k}$ and $U_{j,k}$} \label{*figdefGU}
\end{figure}

For every $(j,k)\in  {{\cal I}}$ let
\begin{equation*}\label{I_{j,k}}
I_{j,k} := \left[j-(k+1)2^{-j}, j-k2^{-j}\right]=[a_{I_{j,k}},b_{I_{j,k}}].
\end{equation*}
In \eqref{*Ujk} a set $U_{j,k}$ will be defined such that with a properly selected $\Lambda$ we have
\begin{equation}\label{I_{j,k} fedese}
I_{j,k} \subset U_{j,k}-\Lambda = \left\{x\in {\ensuremath {\mathbb R}} :   \exists n\in {\ensuremath {\mathbb N}}\cup\{0\} \text{   such that   } x+\lambda_n\in U_{j,k} \right\} \text{   and   }
\end{equation}
\begin{equation}\label{kompl}
\begin{gathered}
\mu ( \{x\in [-j,j] :   \exists \text{   infinitely    
 many $(j^*,k^*)\in {{\cal I}}$} \\
\text{   for which   } x \in
 \left(U_{j^*,k^*}-\Lambda\right)\setminus I_{j^*,k^*} \} ) = 0.
\end{gathered}
\end{equation}
Let $G$ be an arbitrary open subset of $ {\ensuremath {\mathbb R}}$ and let
\[
U_G :=  {\bigcup}\left\{U_{j^*,k^*} : (j^*,k^*)\in {{\cal I}} \text{   and   } I_{j^*,k^*}\subset G \right\}. 
\]
Put
\begin{equation}\label{def f_G}
f_G(x):=
\begin{cases}
1 & \text{   if   }x\in U_G \\  
0 & \text{   else   }.
\end{cases}
\end{equation}
We will prove that  $\Lambda$ and $f_G$ satisfy the conditions of the theorem.

Now we define the sets $U_{j,k}$.
 Before doing this we recall and introduce some notation. 
For every $(j,k)\in {{\cal I}}$ let 
\begin{itemize}
\item $a_{I_{j,k}}:=j-(k+1)\cdot 2^{-j}$ (that is $a_{I_{j,k}}$ is the left endpoint of $I_{j,k}$),
\item $b_{I_{j,k}}:=j-k\cdot 2^{-j}$ (that is $b_{I_{j,k}}$ is the right endpoint of $I_{j,k}$),
\item $E_{j,k}:=2^{-2j\cdot 2^j-k}$,
\item $a_{j,k}:=2^{2j\cdot 2^j+k}$,
\item $b_{j,k}:=a_{j,k}+E_{j,k}$.
\end{itemize}
See Figure \ref{*figdefGU}. This and the other figure in this paper are to illustrate
concepts and they are not drawn to illustrate a certain step, for example with a fixed $j$
of our construction.

Let
\begin{equation}\label{*Ujk}
U_{j,k} :=  {\bigcup}_{i=0}^{E^{-1}_{j,k}-1} [a_{j,k}+iE^2_{j,k},a_{j,k}+iE^2_{j,k}+E^3_{j,k}] \subset [a_{j,k},b_{j,k}].
\end{equation}

Next we prove a useful lemma:

\begin{lemma}\label{useful}
For every $(j,k)\in {{\cal I}}$ we have
\begin{equation}\label{*lem23}
 a_{j,k} \le \frac{a_{{\hat {\jmath}}, {\hat {k}}}}{2} \text{   and   } E_{j,k} \ge 2E_{{\hat {\jmath}}, {\hat {k}}},
\end{equation}
moreover,
\begin{equation}\label{*lem23b}
E_{j,k}/2 \text{   is an integer multiple of   }E_{{\hat {\jmath}}, {\hat {k}}}.
\end{equation}
\end{lemma}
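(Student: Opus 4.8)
The plan is to reduce all three assertions to a single observation: every quantity in sight is a power of $2$ governed by the integer
\[
m_{j,k} := 2j\cdot 2^j + k,
\]
so that $E_{j,k} = 2^{-m_{j,k}}$ and $a_{j,k} = 2^{m_{j,k}}$; in particular $a_{j,k}$ and $E_{j,k}$ are reciprocals, $a_{j,k}E_{j,k}=1$. Rewriting everything through $m_{j,k}$ turns the lemma into three statements about the single exponent difference
\[
\Delta := m_{\hat{\jmath},\hat{k}} - m_{j,k},
\]
and the whole claim will follow from $\Delta \ge 1$.

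First I would compute $\Delta$ from the definition of the immediate successor, splitting into its two cases. In the generic case $k < 2j\cdot 2^j - 1$ we have $(\hat{\jmath},\hat{k}) = (j, k+1)$, hence $m_{\hat{\jmath},\hat{k}} = m_{j,k} + 1$ and $\Delta = 1$. In the carry case $k = 2j\cdot 2^j - 1$ we have $(\hat{\jmath},\hat{k}) = (j+1, 0)$, so $m_{\hat{\jmath},\hat{k}} = 2(j+1)2^{j+1} = 4(j+1)2^j$ while $m_{j,k} = 4j\cdot 2^j - 1$, giving $\Delta = 4\cdot 2^j + 1$, which is certainly $\ge 1$. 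Thus $\Delta \ge 1$ in all cases, and that is the only fact I need.

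The three assertions then follow by elementary manipulation of powers of $2$. For \eqref{*lem23}, the inequality $a_{j,k} \le a_{\hat{\jmath},\hat{k}}/2$ reads $2^{m_{j,k}} \le 2^{m_{\hat{\jmath},\hat{k}} - 1}$, i.e. $\Delta \ge 1$; similarly $E_{j,k} \ge 2E_{\hat{\jmath},\hat{k}}$ reads $2^{-m_{j,k}} \ge 2^{1 - m_{\hat{\jmath},\hat{k}}}$, again $\Delta \ge 1$. For \eqref{*lem23b} I would compute the ratio
\[
\frac{E_{j,k}/2}{E_{\hat{\jmath},\hat{k}}} = \frac{2^{-m_{j,k}-1}}{2^{-m_{\hat{\jmath},\hat{k}}}} = 2^{\Delta - 1},
\]
which is a nonnegative integer power of $2$ exactly because $\Delta \ge 1$; hence $E_{j,k}/2$ is an integer multiple of $E_{\hat{\jmath},\hat{k}}$.

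There is no real obstacle here: the content is the bookkeeping of the two successor cases together with the reciprocity $a_{j,k}=1/E_{j,k}$. The only point needing a little care is the carry case, where one must check that incrementing $j$ and resetting $k$ to $0$ still raises the exponent $m_{j,k}$ (it jumps by $4\cdot 2^j+1$ rather than by $1$), so the inequalities hold with ample slack. It is worth noting that in the generic case $\Delta = 1$ exactly, so all three relations are tight there — $a_{j,k}=a_{\hat{\jmath},\hat{k}}/2$, $E_{j,k}=2E_{\hat{\jmath},\hat{k}}$, and $E_{j,k}/2=E_{\hat{\jmath},\hat{k}}$ — which is precisely what makes the factor $2$ in the statement sharp and the divisibility in \eqref{*lem23b} an equality.
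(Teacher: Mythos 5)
Your proof is correct and is essentially the paper's own argument: both split into the two successor cases, exploit the reciprocity $a_{j,k}E_{j,k}=1$, and observe that everything is a power of $2$ whose exponent increases by at least $1$ (by exactly $1$ generically, by $4\cdot 2^j+1$ in the carry case). Your repackaging via the single exponent difference $\Delta\ge 1$ is just a slightly tidier bookkeeping of the same computation.
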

\begin{proof}
It is enough to prove  \eqref{*lem23} for $a_{j,k}$ as $E_{j,k}=a^{-1}_{j,k}$.

First suppose that $k < 2j\cdot 2^j-1 $, then $ {\hat {\jmath}}=j$, $ {\hat {k}}=k+1$ and
\begin{equation}\label{*ajk231}
a_{j,k} = 2^{2j\cdot 2^j+k} = \frac{2^{2j\cdot 2^j+(k+1)}}{2} = \frac{a_{{\hat {\jmath}} , {\hat {k}}}}{2}. 
\end{equation}
If $k = 2j\cdot 2^j-1 $ then $ {\hat {\jmath}}=j+1$, $ {\hat {k}}=0$ and
\begin{equation}\label{*ajk232}
\begin{split}
a_{j,k} &= 2^{2j\cdot 2^j+k} = 2^{2j\cdot 2^j+2j\cdot 2^j-1} = 2^{4j2^j-1} = \frac{2^{2(j+1)\cdot 2^{(j+1)}}}{2^{2\cdot2^{j+1}+1}} = \frac{a_{{\hat {\jmath}} , {\hat {k}}}}{2^{2\cdot2^{j+1}+1}}. 
\end{split}
\end{equation}
 Using $E_{j,k}=a^{-1}_{j,k}$ from \eqref{*ajk231} and \eqref{*ajk232}
it follows that \eqref{*lem23b} holds.
\end{proof}

Next we turn to the definition of $ {\Lambda}$.

During the definition of $ {\Lambda}$ we will use the notation $d_n:=\lambda_{n}-\lambda_{n-1}$, 
in fact,
 often we will define $d_{n}$ and that will provide the value of $ {\lambda}_{n}$ given
the already defined $ {\lambda}_{n-1}$. Let $\lambda_0:=a_{{1,0}}-b_{I_{1,0}}$
 and $n_{0,1,0}=0.$

Suppose that for a $(j,k)\in  {{\cal I}}$ we have already defined $n_{0,j,k}$
and $ {\lambda}_{n}$ for $n\leq n_{0,j,k}$,
 $\lambda_{n_{0,j,k}}=a_{j,k}-b_{I_{j,k}}$ and $d_{n_{0,j,k}}/E^2_{j,k}$ is a positive integer (or $n_{0,j,k}=0$). Now we need to do our next step to define these
 objects for $( {\hat {\jmath}} , {\hat {k}} ).$

\textbf{Step $( {\hat {\jmath}} , {\hat {k}} )$.}  Let $n_{1,j,k} := n_{0,j,k}+2^{-j}E^{-2}_{j,k}+2E^{-1}_{j,k}$. For every integer $n\in[n_{0,j,k}+1,n_{1,j,k}]$ let $d_n := E^2_{j,k}-E^3_{j,k}$. Thus we have
\begin{equation}\label{lambda_{n_{1,j,k}}}
\begin{split}
\lambda_{n_{1,j,k}} &= \lambda_{n_{0,j,k}} + (2^{-j}E^{-2}_{j,k}+2E^{-1}_{j,k})(E^2_{j,k}-E^3_{j,k}) \\
&= a_{j,k} - b_{I_{j,k}} + 2^{-j} - 2^{-j}E_{j,k} + 2E_{j,k} - 2E^2_{j,k} \\
&= a_{j,k} - a_{I_{j,k}} + 2E_{j,k} - 2^{-j}E_{j,k} - 2E^2_{j,k} \\
&=b_{j,k} - a_{I_{j,k}} + E_{j,k} - 2^{-j}E_{j,k} - 2E^2_{j,k} \ge b_{j,k} - a_{I_{j,k}}
\end{split}
\end{equation}
and (from the second row of \eqref{lambda_{n_{1,j,k}}})
\begin{equation}\label{hasznos}
\begin{split}
\lambda_{n_{1,j,k}} &= a_{j,k} - b_{I_{j,k}} + 2^{-j} - 2^{-j}E_{j,k} + 2E_{j,k} - 2E^2_{j,k} < a_{j,k} - b_{I_{j,k}} + 1.
\end{split}
\end{equation}

Since $a_{j,k}-a_{I_{j,k}}=2^{2j\cdot 2^{j}+k}-(j-k\cdot 2^{-j})$
and $2^{-j}E_{j,k} $ are both
 integer multiples of  $E^2_{j,k}=(2^{-2j\cdot 2^j-k})^{2}$ 
 from the third row of \eqref{lambda_{n_{1,j,k}}} we obtain that
\begin{equation}\label{*intma}
\text{   $\lambda_{n_{1,j,k   }}$ is an integer multiple of }E^2_{j,k}.
\end{equation}

By Lemma \ref{useful} and \eqref{hasznos} we have
\[
a_{{\hat {\jmath}} , {\hat {k}}}-b_{I_{{\hat {\jmath}} , {\hat {k}}}} \ge 2a_{j,k} - (j+1) \ge a_{j,k} + j+1 > a_{j,k} - b_{I_{j,k}} + 1 > \lambda_{n_{1,j,k}}. 
\]

 We set 
\begin{equation}\label{*nojkkk}
 n_{0, {\hat {\jmath}} , {\hat {k}}}=n_{1,j,k}+\frac{a_{{\hat {\jmath}} , {\hat {k}}}-b_{I_{{\hat {\jmath}} , {\hat {k}}}}-\lambda_{n_{1,j,k}}}{2^{-1}E^2_{j,k}} 
\end{equation}
 and 
\begin{equation}\label{*dnkk}
\text{   $d_n=E^2_{j,k   }/2$ for every integer $n\in (n_{1,j,k}, n_{0, {\hat {\jmath}} , {\hat {k}}}]$.}
\end{equation}
 We obtain by \eqref{*nojkkk} 
\[
\lambda_{n_{0, {\hat {\jmath}} , {\hat {k}}}} = \lambda_{n_{1,j,k}} + \frac{(n_{0, {\hat {\jmath}} , {\hat {k}}}-n_{1,j,k})E^2_{j,k}}{2} = \lambda_{n_{1,j,k}} + a_{{\hat {\jmath}} , {\hat {k}}} - b_{I_{{\hat {\jmath}} , {\hat {k}}}} - \lambda_{n_{1,j,k}} = a_{{\hat {\jmath}} , {\hat {k}}} - b_{I_{{\hat {\jmath}} , {\hat {k}}}},
\]
and by \eqref{*lem23b}, $d_{n_{0, {\hat {\jmath}} , {\hat {k}}}} = E^2_{j,k}/2$ is an integer multiple of $E^2_{{\hat {\jmath}} , {\hat {k}}}$, hence \eqref{*intma} implies that
\begin{equation}\label{*intmb}
\text{   $\lambda_{n   }$ is an integer multiple of $E^2_{{\hat {\jmath}} , {\hat {k}}}$ for $n\in (n_{1,j,k}, n_{0, {\hat {\jmath}} , {\hat {k}}}]$.}
\end{equation}

Thus we can proceed to the next step. 
By repeating this procedure
 we can  carry out the above steps for all $(j,k)\in  {{\cal I}}$
and hence we can define $ {\Lambda}$.

Now we prove \eqref{I_{j,k} fedese}. We fix $(j,k)$ and 
choose an arbitrary point $x$ from $I_{j,k}$. 
Let $n_x$ denote the smallest integer for which 
\begin{equation}\label{n_x}
x+\lambda_{n_x} > a_{j,k}. 
\end{equation}
Put $n'_x:=n_x+\left  \lfloor\frac{x+\lambda_{n_x}-a_{j,k}}{E^3_{j,k}}\right  \rfloor$. 

We have $x\in I_{j,k} {\subset} [-j,j]$. From $x+ {\lambda}_{n_{0,j,k}}=x+a_{j,k}-b_{I_{j,k}}$
it follows that
\begin{equation}\label{*xlnojk}
x+ {\lambda}_{n_{0,j,k}}-a_{j,k}=x-b_{I_{j,k}}\leq 0.
\end{equation}
Therefore, $n_{x}> n_{0,j,k}$ and hence 
\begin{equation}\label{*19e*}
\text{$d_n \le d_{n_{0,j,k}+1} = E^2_{j,k}-E^3_{j,k}$ for every $n\in [n_x,\infty).$}
\end{equation}

By minimality of $n_{x}$ we have 
\begin{equation}\label{*nxmin}
x+\lambda_{n_x}-a_{j,k}\leq d_{n_{x}} \leq E^2_{j,k}-E^3_{j,k}.
\end{equation}

Next we will show that $x+\lambda_{n'_x}\in U_{j,k}$. 
 Using \eqref{*19e*}
\begin{equation}\label{szamolas2}
0 \le \left  \lfloor\frac{x+\lambda_{n_x}-a_{j,k}}{E^3_{j,k}}\right  \rfloor \le  \frac{d_{n_x}}{E^3_{j,k}} \le  \frac{E^2_{j,k}-E^3_{j,k}}{E^3_{j,k}} = E^{-1}_{j,k}-1.
\end{equation}
We also infer
\begin{equation}
\begin{split}
\label{szamolas3}
x+\lambda_{n'_x} &= x + \lambda_{n_x} + \sum_{n\in (n_x,n'_x]} d_n \le x + \lambda_{n_x} + \left  \lfloor\frac{x+\lambda_{n_x}-a_{j,k}}{E^3_{j,k}}\right  \rfloor(E^2_{j,k}-E^3_{j,k}) \\ 
&= a_{j,k} + (x+\lambda_{n_x}-a_{j,k}) + \left  \lfloor\frac{x+\lambda_{n_x}-a_{j,k}}{E^3_{j,k}}\right  \rfloor(E^2_{j,k}-E^3_{j,k}) \\
&= a_{j,k} + \left  \lfloor\frac{x+\lambda_{n_x}-a_{j,k}}{E^3_{j,k}}\right  \rfloor E^2_{j,k} + E^3_{j,k} \left\{\frac{x+\lambda_{n_x}-a_{j,k}}{E^3_{j,k}}\right\}\\
&\text{   using \eqref{szamolas2}}
\\
&\le a_{j,k} + (E^{-1}_{j,k}-1)E^2_{j,k} + E^3_{j,k} \le a_{j,k} + E_{j,k} = b_{j,k}.
\end{split}
\end{equation}
 From \eqref{lambda_{n_{1,j,k}}} and  \eqref{szamolas3}  we obtain
\[
\lambda_{n'_x} \le b_{j,k}-x \le b_{j,k}-a_{I_{j,k}} \le \lambda_{n_{1,j,k}},
\]
hence $n_x,n'_x \le n_{1,j,k}$, which means that $d_n=E^2_{j,k}-E^3_{j,k}$ for every $n\in (n_x,n'_x]$. This implies that the first inequality in \eqref{szamolas3} is, 
in fact an equality, that is
\begin{equation}\label{szamolas3'}
\begin{split}
x+\lambda_{n'_x} = a_{j,k} + \left  \lfloor\frac{x+\lambda_{n_x}-a_{j,k}}{E^3_{j,k}}\right  \rfloor E^2_{j,k} + E^3_{j,k}\left\{\frac{x+\lambda_{n_x}-a_{j,k}}{E^3_{j,k}}\right\}
.\end{split}
\end{equation}
Using \eqref{szamolas2} and \eqref{szamolas3'} we can see that there exists an integer $i=\left  \lfloor\frac{x+\lambda_{n_x}-a_{j,k}}{E^3_{j,k}}\right  \rfloor\in[0,E^{-1}_{j,k}-1]$ such that 
\[
a_{j,k}+iE^2_{j,k} \le x+\lambda_{n'_x} \le a_{j,k}+iE^2_{j,k}+E^3_{j,k}
\]
that is $x+\lambda_{n'_x}\in U_{j,k}$, which implies \eqref{I_{j,k} fedese}.

We continue with the proof of \eqref{kompl}. 
Suppose $( {\check {\jmath}}, {\check {k}}),(j,k),( {\hat {\jmath}} , {\hat {k}} )\in {{\cal I}}$. 
Then they are strictly monotone increasing in this order and are adjacent in
the lexicographical ordering of  $ {{\cal I}}$. 
We have by Lemma \ref{useful} and the third row of \eqref{lambda_{n_{1,j,k}}} 
\begin{equation}
\begin{split}
\label{szamolas4}
j + \lambda_{n_{1, {\check {\jmath}}, {\check {k}}}} &= j + a_{{\check {\jmath}}, {\check {k}}} - a_{I_{{\check {\jmath}}, {\check {k}}}} + 2E_{{\check {\jmath}}, {\check {k}}} - 2^{- {\check {\jmath}}}E_{{\check {\jmath}}, {\check {k}}} - 2E^2_{{\check {\jmath}}, {\check {k}}}\\
 &< a_{{\check {\jmath}}, {\check {k}}} + 2j + 1 \le 2a_{{\check {\jmath}}, {\check {k}}} \le a_{j,k},
\end{split}
\end{equation}
that is $U_{j,k}-\lambda_{n_{1, {\check {\jmath}}, {\check {k}}}}$ is to the right of $j$. By \eqref{*intmb}, $\lambda_n/E^2_{j,k}$ is an integer for every $n\in (n_{1, {\check {\jmath}}, {\check {k}}},n_{0,j,k}]$. Therefore, \eqref{szamolas4} implies that
\begin{equation}\label{szamolas5}
\begin{split}
B_{j,k} :&= [b_{j,k}-\lambda_{n_{0,j,k}},j] \cap (U_{j,k}-\Lambda) \\
 &= [b_{j,k}-\lambda_{n_{0,j,k}},j] \cap \left(U_{j,k}-\{\lambda_n : n\in (n_{1, {\check {\jmath}}, {\check {k}}},n_{0,j,k}]\}\right)\\
&  {\subset} [b_{j,k}-\lambda_{n_{0,j,k}},j] \cap \bigcup_{i\in {\ensuremath {\mathbb Z}}} [iE^2_{j,k},iE^2_{j,k}+E^3_{j,k}].
\end{split}
\end{equation}
Similarly, by using \eqref{*lem23}
\begin{equation}\label{szamolas6}
\begin{split}
-j + \lambda_{n_{0, {\hat {\jmath}} , {\hat {k}}}} &= -j + a_{{\hat {\jmath}} , {\hat {k}}} - b_{I_{{\hat {\jmath}} , {\hat {k}}}}  > a_{{\hat {\jmath}}, {\hat {k}}} - (2j+1) \\
&\ge 2a_{j,k} - (2j+1) \ge a_{j,k} + E_{j,k} = b_{j,k},
\end{split}
\end{equation}
that is $U_{j,k}-\lambda_{n_{0, {\hat {\jmath}} , {\hat {k}}}}$ is to the left of $-j$. Since 
by \eqref{*intma} and \eqref{*dnkk}
$\lambda_n/\left(E^2_{j,k}/2\right)$ is an integer for every $n\in [n_{1,j,k},n_{0, {\hat {\jmath}} , {\hat {k}}}]$, \eqref{szamolas6} implies that
\begin{equation}\label{szamolas7}
\begin{split}
A_{j,k} :&= [-j,a_{j,k}-\lambda_{n_{1,j,k}}] \cap (U_{j,k}-\Lambda) \\
 &= [-j,a_{j,k}-\lambda_{n_{1,j,k}}] \cap \left(U_{j,k}-\{\lambda_n : n\in [n_{1,j,k},n_{0, {\hat {\jmath}} , {\hat {k}}}]\}\right)\\
& {\subset} [-j,a_{j,k}-\lambda_{n_{1,j,k}}] \cap  {\bigcup}_{i\in {\ensuremath {\mathbb Z}}} [iE^2_{j,k}/2,iE^2_{j,k}/2+E^3_{j,k}].
\end{split}
\end{equation}

We want to estimate the following expression from above: 
\begin{equation}\label{kompl_megint}
\begin{gathered}
\mu\left( [-j,j] \cap (U_{j,k}-\Lambda)\setminus I_{j,k} \right) \\
 \le \mu\left( A_{j,k} \cup [a_{j,k}-\lambda_{n_{1,j,k}},a_{I_{j,k}}] \cup [b_{I_{j,k}},b_{j,k}-\lambda_{n_{0,j,k}}] \cup B_{j,k} \right).
\end{gathered}
\end{equation}
By \eqref{szamolas5} and \eqref{szamolas7} we have
\begin{equation}\label{szamolas8}
\begin{split}
&\mu\left(A_{j,k} \cup B_{j,k}\right) \\
&\le \mu\left( [-j,j] \cap \Big ( {\bigcup}_{i\in {\ensuremath {\mathbb Z}}} [iE^2_{j,k}/2,iE^2_{j,k}/2+E^3_{j,k}]  \Big )\right) \\
&= E^3_{j,k} \dfrac{2j}{E^2_{j,k}/2}   = 4j\cdot E_{j,k},
\end{split}
\end{equation}
and  using the third row of \eqref{lambda_{n_{1,j,k}}}
\begin{equation}\label{szamolas9}
\begin{split}
\mu\left( [a_{j,k}-\lambda_{n_{1,j,k}},a_{I_{j,k}}] \right) &= a_{I_{j,k}}-\left( a_{j,k}-(a_{j,k} - a_{I_{j,k}} + 2E_{j,k} - 2^{-j}E_{j,k} - 2E^2_{j,k}) \right) \\
&= 2E_{j,k} - 2^{-j}E_{j,k} - 2E^2_{j,k} \le 2E_{j,k}.
\end{split}
\end{equation}
Moreover,
\begin{equation}\label{szamolas10}
\begin{split}
\mu[b_{I_{j,k}},b_{j,k}-\lambda_{n_{0,j,k}}] &= b_{j,k}-(a_{j,k}-b_{I_{j,k}})-b_{I_{j,k}} = b_{j,k}-a_{j,k} = E_{j,k}.
\end{split}
\end{equation}
Writing \eqref{szamolas8}, \eqref{szamolas9} and \eqref{szamolas10} into \eqref{kompl_megint} yields
\begin{equation}\label{kompl_becsles1}
\mu\left( [-j,j] \cap (U_{j,k}-\Lambda)\setminus I_{j,k} \right) \le 
(4j+3)\cdot E_{j,k}.
\end{equation}
Thus
\begin{equation}
\label{kompl_becsles2}
\sum_{(j^*,k^*)\in {{\cal I}}} \mu\left( [-j,j] \cap (U_{j^*,k^*}-\Lambda)\setminus I_{j^*,k^*}  \right)
\end{equation}
$$\le \sum_{\substack{(j^*,k^*)\in {{\cal I}} \\ j^*<j}} \mu\left( [-j,j] \cap (U_{j^*,k^*}-\Lambda)\setminus I_{j^*,k^*}  \right) $$
$$+ \sum_{\substack{(j^*,k^*)\in {{\cal I}} \\}} \mu\left( [-j^*,j^*] \cap (U_{j^*,k^*}-\Lambda)\setminus I_{j^*,k^*}  \right)$$
$$\le \sum_{\substack{(j^*,k^*)\in {{\cal I}} \\ j^*<j}} 2j + 
\sum_{\substack{(j^*,k^*)\in {{\cal I}} \\}} (4j^*+3)\cdot E_{j^*,k^*}
$$ 
$$\leq  2j\cdot 2j(2^{j-1}+...+1)  + \sum_{j^*=1}^{\infty} \sum_{k^*=0}^{2j^*\cdot 2^{j^*}-1} 
(4j^*+3)E_{j^*,k^*}
$$ 
$$
\le 4j^2\cdot  
2^{j} + \sum_{j^*=1}^{\infty} 2j^*\cdot 2^{j^*} (4j^*+3)2^{-2j^*\cdot 2^{j^*}} 
$$
$$
\le 4j^2 \cdot 2^{j} + \sum_{j^*=1}^{\infty} \left(8(j^*)^2+6j^*\right)2^{-2j^*\cdot 2^{j^*}+j^*} < \infty,
$$
which by the Borel–Cantelli lemma implies \eqref{kompl}.

Let $G$ be a fixed open subset of $ {\ensuremath {\mathbb R}}$. If $x\in G$, then $\{(j,k)\in {{\cal I}} : x\in I_{j,k}\subset G \}$ is an infinite set, hence according to \eqref{I_{j,k} fedese} and \eqref{def f_G}
\[
\sum_{n=0}^\infty f_G(x+\lambda_n) = \infty.
\]
If $x\in {\mathbb {R}} {\setminus} G$ and $\sum_{n=0}^\infty  f_G(x+\lambda_n) =\infty$, then $\{n\in {\mathbb {N}} : x+\lambda_n\in U_G\}$ is an infinite set, which implies that $\{(j^*,k^*)\in {{\cal I}} :  I_{j^*,k^*}\subset G \text{    and    } x\in (U_{j^*,k^*}-\Lambda)\}$ is also infinite, thus \eqref{kompl} implies \eqref{tetelbeli}.

Next we see how one can modify $f_{G}$ to obtain a 
$g_{G}\in C_{0}^{+}( {\ensuremath {\mathbb R}})$ still satisfying 
\eqref{tetelbeli} and \eqref{*conv}. In \cite{[BKM1]} there is Proposition 1, which says
that one can modify $f_{G}$ to obtain a $g_{G}\in C_{0}^{+}( {\ensuremath {\mathbb R}})$ such that
$C(f_{G},\lambda)=C(g_{G},\lambda)$ a.e. and
$D(f_{G},\lambda)=D(g_{G},\lambda)$  a.e. Since we want to preserve 
\eqref{*conv} we cannot change $D(f_{G},\lambda)$ by an arbitrary set of measure zero. Hence in the next construction a little extra care is needed.

\begin{equation}\label{*defLN}
\text{   Put $ {\Lambda   }_N=\{{\lambda}\in {\Lambda}:  {\lambda}\leq 10N \}$
and $L_{N}=\# {\Lambda}_N$.}
\end{equation}

Observe that $U_{G}\cap (- {\infty},0]= {\emptyset}$, $U_{G}$ does not contain a half-line, and $U_{G}\cap [0,N]$ is the union of finitely
many disjoint closed intervals for any $N\in {\ensuremath {\mathbb N}}$.

Choose an open 
$\widetilde{U}_G\supset U_{G}$ such that it does not contain a half-line, and 
\begin{equation}\label{*UtG}
 {\mu}((\widetilde{U}_G {\setminus} U_{G})\cap[N-1,N])< \frac{2^{-N}}{L_{N}} \text{   for any   }N\in {\ensuremath {\mathbb N}}.
\end{equation}

Select a continuous function $\widetilde{g}_{G}$  such that $\widetilde{g}_{G}(x)=f_{G}(x)$ for $x\in U_{G}$,
$\widetilde{g}_{G}(x)=0$ if $x\not\in \widetilde{U}_G$ and $|\widetilde{g}_{G}|\leq 1.$
Hence $\widetilde{g}_{G}\geq f_{G}$ on $ {\ensuremath {\mathbb R}}$,  and
$D(\widetilde{g}_{G}, {\Lambda})\supset D(f_{G}, {\Lambda})\supset G.$

It is also clear that
$0\leq \widetilde{g}_{G}-f_{G}\leq \chi_{\widetilde{U}_G  {\setminus} U_{G}}=:h_{G}$, and
\begin{equation}\label{*A2*h6}
\sum_{{\lambda}\in  {\Lambda}}\Big (\widetilde{g}_{G}(x+ {\lambda})-f_{G}(x+ {\lambda})\Big )\leq
\sum_{{\lambda}\in  {\Lambda}}h_{G}(x+ {\lambda}).
\end{equation}

Next we prove  that 
\begin{equation}\label{*sumhgl}
\text{   $\sum_{{\lambda  }\in  {\Lambda }}h_{G}(x+ {\lambda})$ is finite almost everywhere,}
\end{equation}
yielding that $C(\widetilde{g}_{G}, {\Lambda})$ equals $C(f_{G}, {\Lambda})$ modulo a set of measure zero.

Put $H_{G,K, {\infty}}=\{x\in [-K,K]: \sum_{{\lambda}\in  {\Lambda}}h_{G}(x+ {\lambda})= {\infty} \}$.
We will show that 
\begin{equation}\label{*mest*}
\text{   for any $K>1$ we have $ {\mu   }(H_{G,K, {\infty}})=0$.}
\end{equation}
This clearly implies \eqref{*sumhgl}.

Observe that if $x\in H_{G,K, {\infty}}$, then there are infinitely many $ {\lambda}$s  such that 
$x+ {\lambda}\in \widetilde{U}_G {\setminus} U_{G}$, that is,
$x\in ((\widetilde{U}_G {\setminus} U_{G})- {\lambda})\cap [-K,K]$.
Thus, by the Borel--Cantelli lemma to prove \eqref{*mest*} it is sufficient to show
that
\begin{equation}\label{*BC*}
\sum_{{\lambda}\in {\Lambda}} {\mu}\Big (\Big ((\widetilde{U}_G {\setminus} U_{G})- {\lambda}\Big )\cap [-K,K]\Big )< {\infty}.
\end{equation}

This is shown by the following estimate
$$
\sum_{{\lambda}\in {\Lambda}} {\mu}\Big (\Big ((\widetilde{U}_G {\setminus} U_{G})- {\lambda}\Big )\cap [-K,K]\Big )=
\sum_{{\lambda}\in {\Lambda}}\sum_{N=1}^{{\infty}} {\mu}\Big (\Big (((\widetilde{U}_G {\setminus} U_{G})\cap[N-1,N])- {\lambda}\Big )\cap [-K,K]\Big )
$$
$$
=\sum_{N=1}^{{\infty}}\sum_{{\lambda}\in {\Lambda}} {\mu}\Big (\Big ((\widetilde{U}_G {\setminus} U_{G})\cap[N-1,N]\Big )\cap [ {\lambda}-K, {\lambda}+K]\Big )$$
$$
=\sum_{N=1}^{K}\sum_{{\lambda}\in {\Lambda}} {\mu}\Big (\Big ((\widetilde{U}_G {\setminus} U_{G})\cap[N-1,N]\Big )\cap [ {\lambda}-K, {\lambda}+K]\Big )
$$
$$
{\hspace*{2cm}}+\sum_{N=K+1}^{{\infty}}\sum_{{\lambda}\in {\Lambda}} {\mu}\Big (\Big ((\widetilde{U}_G {\setminus} U_{G})\cap[N-1,N]\Big )\cap [ {\lambda}-K, {\lambda}+K]\Big )
$$
(with a finite $S_{1}$)
$$
=S_1+\sum_{N=K+1}^{{\infty}}\sum_{\ \  {\lambda}\in {\Lambda},\,   {\lambda}\leq 10N} {\mu}\Big (\Big ((\widetilde{U}_G {\setminus} U_{G})\cap[N-1,N]\Big )\cap [ {\lambda}-K, {\lambda}+K]\Big )
$$
(now using \eqref{*defLN} and  \eqref{*UtG})
$$
\leq S_{1}+\sum_{N=K+1}^{{\infty}} L_{N}\cdot \frac{2^{-N}}{L_{N}}< {\infty}.
$$

So far we have shown that $\widetilde{g}_{G}$ satisfies \eqref{tetelbeli} and \eqref{*conv}.
Since $\widetilde{g}_{G}\in C^{+}( {\ensuremath {\mathbb R}})$, but not in $C_{0}^{+}( {\ensuremath {\mathbb R}})$. We need to adjust it a little 
further.

Since $G$ is open choose an increasing 
 sequence of compact sets $G_{K} {\subset} G\cap [-K,K]$
 such that $ {\bigcup}_{K=1}^{{\infty}}G_{K}=G.$
 
 Put $M_{0}=0$.
 Choose $M_{1}\in  {\ensuremath {\mathbb R}} $
 such that for any $x\in G_{1}$ we have 
 $$\sum_{{\lambda}\in {\Lambda}, \  M_{0}+10< {\lambda}<M_{1}}\widetilde{g}_{G}(x+ {\lambda})>1,$$
 and $\widetilde{g}_{G}(M_{1}+5)=0$. This latter property can be satisfied since 
 by assumption $\widetilde{U}_G$ does not contain a half-line.
 
 In general, if we already have selected $M_{K-1}$
 such that $\widetilde{g}_{G}(M_{K-1}+5(K-1))=0$ then choose $M_{K}\in  {\ensuremath {\mathbb R}}$
  such that  for any $x\in G_{K}$ we have
  \begin{equation}\label{*A5K*}
  \sum_{{\lambda}\in {\Lambda}, \  M_{K-1}+10K< {\lambda}<M_{K}}\widetilde{g}_{G}(x+ {\lambda})>K,
  \end{equation}
  and $\widetilde{g}_{G}(M_{K}+5K)=0$.

For $x\leq M_{1}+5$ we put $g_{G}(x)=\widetilde{g}_{G}(x)$. For $K>1$ and $x\in (M_{K-1}+5(K-1),
M_{K}+5K]$ we put $g_{G}(x)=\frac{1}{K}\widetilde{g}_{G}(x).$

It is clear that $g_{G}\in C_{0}^{+}( {\ensuremath {\mathbb R}}).$

Since $g_{G}\leq \widetilde{g}_{G}$ we have
$C(g_{G}, {\Lambda})\supset C(\widetilde{g}_{G}, {\Lambda}).$
If we can show that $G {\subset} D(g_{G}, {\Lambda})$ then we are done.
Suppose $x\in G$. Then there is a $K_{x}$
 such that $x\in G_{K}$ for any $K\geq K_{x}$.
 Therefore, for these $K$ we have $x\in [-K_{x},K_{x}] {\subset}
 [-K,K]$ and by using \eqref{*A5K*}
 $$
 \sum_{{\lambda}\in {\Lambda}, \  M_{K-1}+6K< {\lambda}<M_{K}+4K}g_{G}(x+ {\lambda})
 =\sum_{{\lambda}\in {\Lambda}, \  M_{K-1}+6K< {\lambda}<M_{K}+4K}\frac{1}{K} {{\widetilde {g}}}_{G}(x+ {\lambda})>1,
 $$
 for any $K\geq K_{x}$ and hence $x\in D(g_{G}, {\Lambda})$.
\end{proof}

\section{Subintervals in $C(f, {\Lambda})$}\label{*secsubi}

\begin{theorem}\label{*thdivG}
There exists an asymptotically dense infinite discrete set $ {\Lambda}$
 such that for any open set $G {\subset}  {\ensuremath {\mathbb R}}$ one can select an
 $f_{G}\in C^{+}_{0}( {\ensuremath {\mathbb R}})$  such that $D(f, {\Lambda})=G.$
\end{theorem}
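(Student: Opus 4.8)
The plan is to reuse the combinatorial skeleton of Theorem~\ref{*thdflg} — the index set $\mathcal I$, the fine covering intervals $I_{j,k}$ of $[-j,j]$, and the far-away comb blocks $U_{j,k}\subset[a_{j,k},b_{j,k}]$ — but to upgrade it in the three ways forced by the stronger conclusion $D(f_G,\Lambda)=G$ \emph{exactly}. First, since we now want $f_G\in C_0^+(\mathbb R)$, I would replace the characteristic function of each tooth of $U_{j,k}$ by a continuous bump of small height $h_{j,k}\to 0$; to compensate for $h_{j,k}\to 0$ I would keep the gap structure so that, for $x\in I_{j,k}$, the orbit $\{x+\lambda_n\}$ still sweeps across \emph{every} tooth of the comb, producing $\gtrsim h_{j,k}^{-1}$ hits and hence a block contribution $s_{j,k}(x):=\sum_{n}\phi_{j,k}(x+\lambda_n)\ge 1$. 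Choosing $h_{j,k}\asymp E_{j,k}$ then makes $f_G=\sum\phi_{j,k}\in C_0^+(\mathbb R)$, since the supports are disjoint and escape to $+\infty$ while the heights tend to $0$.

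Second, to remove the exceptional measure-zero set of \eqref{tetelbeli}, I would activate a block only when its \emph{enlarged} interval is compactly contained in $G$: let $I_{j,k}^+$ be $I_{j,k}$ dilated by a fixed factor and put $f_G=\sum\{\phi_{j,k}:\overline{I_{j,k}^+}\subset G\}$. Then every $x\in G$ still lies in $I_{j,k}$ for infinitely many activated $(j,k)$, so $\sum_n f_G(x+\lambda_n)=\infty$ and $G\subset D(f_G,\Lambda)$; conversely, for $x\notin G$ the \emph{intended} resonance of any activated block vanishes, because that part of $s_{j,k}$ is supported in a neighbourhood of $I_{j,k}$ contained in $I_{j,k}^+\subset G$. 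What survives at a point $x\notin G$ are only the \emph{stray} contributions, coming from translates $U_{j,k}-\lambda_n$ that accidentally fall on $x$ without being part of the sweep of $I_{j,k}$. Thus the theorem reduces to the pointwise estimate
\[
\sum_{(j,k)\in\mathcal I}\ \sum_{n:\,x+\lambda_n\in U_{j,k},\ x\notin I_{j,k}^+}\phi_{j,k}(x+\lambda_n)<\infty\qquad\text{for \emph{every} }x\in\mathbb R ,
\]
which is the everywhere-version of the complement bound \eqref{kompl}.

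Establishing this pointwise-everywhere bound is where I expect the main difficulty, and it is exactly where the freedom to drop monotonicity of the gaps is spent. In Theorem~\ref{*thdflg} the analogous strays were controlled only in measure — the sets $A_{j,k},B_{j,k}$ in \eqref{szamolas5}–\eqref{szamolas7} are small, and Borel--Cantelli then kills them \emph{almost everywhere} — but a boundary point of $G$ (and $\partial G$ may even have positive measure) can sit in such a set for infinitely many scales, so an a.e. argument does not suffice. The remedy I would pursue is to redesign the gaps of $\Lambda$ in the transition ranges between consecutive comb blocks so that they are large relative to the comb width $E_{j,k}$ rather than comparable to the tooth spacing $E^2_{j,k}$; then the orbit step through a block is too coarse to land on more than boundedly many teeth outside the intended sweep, so each stray block contributes at most $O(h_{j,k})=O(E_{j,k})$ to any single $x$, and summing over $\mathcal I$ (using the geometric decay already recorded in \eqref{kompl_becsles2}) gives a finite total at \emph{every} point. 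Since such large, non-monotone gaps are incompatible with a decreasing-gap set, this is consistent with Theorem~\ref{*thcintl}, which precludes a universal decreasing-gap $\Lambda$ with this exact property: monotonicity must be sacrificed, while $d_n\to0$ is retained so that $\Lambda$ stays asymptotically dense.
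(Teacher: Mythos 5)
Your construction fails at its crux: the claim that, keeping the gap structure of Theorem \ref{*thdflg}, a point $x\in I_{j,k}$ produces $\gtrsim h_{j,k}^{-1}$ hits on the comb $U_{j,k}$ is false. In that construction the in-block steps are $d_n=E_{j,k}^2-E_{j,k}^3$, while the teeth have period $E_{j,k}^2$ and width $E_{j,k}^3$; relative to the comb the orbit therefore drifts by exactly one tooth-width per step, and during the $\approx E_{j,k}^{-1}$ steps it spends inside $[a_{j,k},b_{j,k}]$ its offset modulo $E_{j,k}^2$ runs through a single full period, entering the window $[0,E_{j,k}^3]$ essentially once. This is precisely why the paper's proof of \eqref{I_{j,k} fedese} produces a single index $n'_x$ (see \eqref{szamolas3'}): one hit suffices there because $f_G$ is the characteristic function of the teeth. ``Sweeps across every tooth'' does not imply ``lands in every tooth,'' and the latter is what you need. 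With bumps of height $h_{j,k}\asymp E_{j,k}$ supported on the teeth, each activated block contributes $O(E_{j,k})$ to $s_{j,k}(x)$, not $\ge 1$; summing over the blocks containing a given $x\in G$ gives $\sum_j O(2^{-2j\cdot 2^j})<\infty$, so together with your (correct) summable-stray bound, $\sum_n f_G(x+\lambda_n)$ converges \emph{everywhere} and $D(f_G,\Lambda)=\emptyset\neq G$. The proposal thus has a genuine gap at the divergence half of the statement.

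The repair requires changing the resonance mechanism, not just the heights, and it is what the paper's own proof of Theorem \ref{*thdivG} does: make the distant target a \emph{solid} interval (equivalently, teeth of full width, or in-block steps at most the tooth width) swept by a progression whose step is smaller than the target's width by the factor that compensates the height. Concretely, the paper takes for each suitably enumerated dyadic interval $I_j\subset[-j,j]$ with $\mu(I_j)\ge 1/j$ the interval $U_j=[2^j,2^j+2^{-2^j}]$, a continuous bump $f_j$ of height $2^{-j}$ on $U_j$, and a finite progression $\Lambda_{1,j}$ of step $2^{-2^j-j}$ placed so that for $x\in I_j$ the orbit makes $\ge 2^j$ hits, giving block contribution $\ge 1$ as in \eqref{*L1j}, while for $x\in[-j,j]\setminus\overline{I}_j$ one has $\overline{U}_j-\Lambda_{1,j}\subset\overline{I}_j$, so the stray contribution is exactly $0$ as in \eqref{*L1jo}; no summability argument is needed for $\Lambda_1$ at all. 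Asymptotic density is then restored by a separate coarse set $\Lambda_2$ whose spacing $2^{-j}$ exceeds the width of $\overline{U}_j$, so that it contributes at most one hit of height $2^{-j}$ per block. This last step is essentially your stray-control idea, and your other two points — activating a block only when an enlarged interval lies in $G$, and sacrificing monotonicity of the gaps while keeping $d_n\to 0$ — are sound and consistent with the paper. But as written, with the comb and the step $E_{j,k}^2-E_{j,k}^3$ retained and heights tending to $0$, your $f_G$ diverges nowhere.
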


\begin{remark}
 As Theorem 
\ref{*thcintl} shows in the above theorem we 
cannot assume that
$ {\Lambda}$ is a decreasing gap set. On the other hand, in our claim
we have $D(f, {\Lambda})=G$, that is,  there is no exceptional set of measure
zero where we do not know what happens. This also implies that if the interior
of $ {\ensuremath {\mathbb R}} {\setminus} G$ is non-empty then $C(f, {\Lambda})$ contains intervals.
\end{remark}

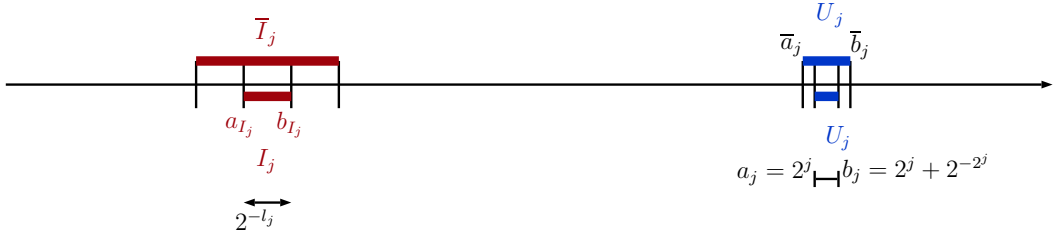
\begin{figure}[h]
\centering{
\resizebox{1.0\textwidth}{!}{
%
\psscalebox{1.0 1.0} 
{
\begin{pspicture}(0,-1.9084375)(18.38,1.9084375)
\definecolor{colour0}{rgb}{0.62352943,0.007843138,0.043137256}
\definecolor{colour1}{rgb}{0.0,0.21176471,0.7882353}
\psline[linecolor=black, linewidth=0.04, arrowsize=0.05291667cm 2.0,arrowlength=1.4,arrowinset=0.0]{->}(0.0,0.65490234)(17.6,0.65490234)
\psline[linecolor=black, linewidth=0.04](3.2,1.0549023)(3.2,0.25490233)
\psline[linecolor=black, linewidth=0.04](4.0,1.0549023)(4.0,0.25490233)
\psline[linecolor=black, linewidth=0.04](4.8,1.0549023)(4.8,0.25490233)
\psline[linecolor=black, linewidth=0.04](5.6,1.0549023)(5.6,0.25490233)
\psline[linecolor=black, linewidth=0.04](13.6,1.0549023)(13.6,0.25490233)
\psline[linecolor=colour0, linewidth=0.16](4.0,0.45490235)(4.8,0.45490235)
\psline[linecolor=colour0, linewidth=0.16](3.2,1.0549023)(5.6,1.0549023)
\rput[t](4.78,0.19490235){\textcolor{colour0}{$b_{I_j}$}}
\rput[t](3.94,0.094902344){\textcolor{colour0}{$a_{I_j}$}}
\rput[t](4.38,-0.44509766){\textcolor{colour0}{$I_{j}$}}
\rput[t](4.38,1.7949023){\textcolor{colour0}{$\overline{I}_{j}$}}
\rput[t](4.2,-1.4650977){$2^{-l_j}$}
\psline[linecolor=black, linewidth=0.04, arrowsize=0.05291667cm 2.0,arrowlength=1.4,arrowinset=0.0]{<->}(4.0,-1.3450977)(4.8,-1.3450977)
\psline[linecolor=black, linewidth=0.04](14.0,1.0549023)(14.0,0.25490233)
\psline[linecolor=black, linewidth=0.04](13.4,1.0549023)(13.4,0.25490233)
\psline[linecolor=black, linewidth=0.04](14.2,1.0549023)(14.2,0.25490233)
\psline[linecolor=colour1, linewidth=0.16](13.4,1.0549023)(14.2,1.0549023)
\psline[linecolor=colour1, linewidth=0.16](13.6,0.45490235)(14.0,0.45490235)
\rput[t](14.0,-0.045097657){\textcolor{colour1}{$U_{j}$}}
\rput[b](13.86,1.6349024){\textcolor{colour1}{$\overline{U}_{j}$}}
\rput[r](13.58,-0.8250977){$a_{j}=2^j$}
\rput[l](14.06,-0.7650977){$b_{j}=2^j+2^{-2^j}$}
\rput[r](13.4,1.3349023){$\overline{a}_{j}$}
\rput[l](14.2,1.3349023){$\overline{b}_{j}$}
\psline[linecolor=black, linewidth=0.04, tbarsize=0.07055555cm 5.1]{|*-|*}(13.6,-0.9450977)(14.0,-0.9450977)
\end{pspicture}
}}}
\caption{Definition of $I_{j}$, $U_{j}$ and related sets} \label{*fig2}
\end{figure}

\begin{proof}
Denote by $ {{\cal I}}_{D}=\{[(k-1)/2^{l},k/2^{l}] :\  k,l\in {\ensuremath {\mathbb Z}}, \  l\geq 0 \}$
the system of dyadic intervals. It is clear that one can enumerate the elements
of $ {{\cal I}}_{D}$ in a sequence $\{I_{j} \}_{j=1}^{{\infty}}$ which satisfies the following properties
\begin{equation}\label{*Ija}
I_{j}=[a_{I_j},b_{I_j}]=\Big [\frac{k_{j}-1}{2^{l_{j}}},\frac{k_{j}}{2^{l_{j}}}\Big ] {\subset} [-j,j] 
\text{   and   }  {\mu}(I_{j})=2^{-l_{j}}\geq \frac{1}{j}.
\end{equation}
We denote by $\overline{I}_j$ the closed interval which is concentric with $I_{j}$ but is of length three times the length of $I_{j}$.

We put $$U_{j}=[a_{j},b_{j}]=[2^j,2^j+2^{-2^j}]\text{   and   }\overline{U}_j=[a_{j}-2^{-2^j-j-1},b_{j}+2^{-2^j-j-1}]=
[\overline{a}_{j},\overline{b}_{j}]. $$
See Figure \ref{*fig2}.

We suppose that $f_{j}(x)=0$ if $x\not \in \overline{U}_j$, $f_{j}(x)=2^{-j}$ if $x \in U_j$, the function $f_{j}$  is continuous on $ {\ensuremath {\mathbb R}}$ and is linear on the connected components of $\overline{U}_j {\setminus} U_{j}.$
We define
\begin{equation}\label{*Lojd}
 {\Lambda}_{1,j}=\{k\cdot 2^{-2^j-j}:k\in {\ensuremath {\mathbb Z}} \}\cap [2^j -k_{j}2^{-l_j},2^j+2^{-2^j}-(k_{j}-1)2^{-l_j}]
\end{equation}
$$=\{k\cdot 2^{-2^j-j}:k\in {\ensuremath {\mathbb Z}} \}\cap [a_{j}-b_{I_j},b_{j}-a_{I_j}]$$
and put $  \displaystyle   {\Lambda}_{1}= {\bigcup}_{j=1}^{{\infty}} {\Lambda}_{1,j}.$

Observe that if $x\in I_{j}$ then
\[
x+\min\Lambda_{1,j} \le b_{I_j}+\min\Lambda_{1,j} = b_{I_j}+a_j-b_{I_j} = a_j
\]
and
\[
x+\max\Lambda_{1,j} \ge a_{I_j}+\max\Lambda_{1,j} = a_{I_j}+b_j-a_{I_j} = b_j,
\]
hence
\begin{equation}\label{*L1j}
\sum_{{\lambda}\in {\Lambda}_{1,j}}f_{j}(x+ {\lambda})\geq \frac{\diam U_j}{2^{-2^j-j}} 2^{-j} = \frac{2^{-2^j}}{2^{-2^j-j}} 2^{-j} = 1.
\end{equation}
On the other hand, by \eqref{*Ija}
\begin{equation*}
\begin{split}
\overline{U}_j-\Lambda_{1,j} &= \left[\min\overline{U}_j-\max\Lambda_{1,j},\max\overline{U}_j-\min\Lambda_{1,j}\right] \\
= [\overline{a}_j-b_j+&a_{I_j},\overline{b}_j-a_j+b_{I_j}] = 
\left[a_{I_j}-2^{-2j}-2^{-2^{j}-j-1},b_{I_j}+2^{-2j}+2^{-2^{j}-j-1}\right] \\
&\subset \left[a_{I_j}-\frac{1}{j},b_{I_j}+\frac{1}{j}\right]  \subset \left[a_{I_j}-2^{-l_j},b_{I_j}+2^{-l_j}\right] = \overline{I}_j
\end{split}
\end{equation*}
thus
\begin{equation}\label{*L1jo}
\sum_{{\lambda}\in {\Lambda}_{1,j}}f_{j}(x+ {\lambda})=0\text{   if $x\in [-j,j]$, $x\not\in \overline{I   }_j$.}
\end{equation}

Suppose $G {\subset}  {\ensuremath {\mathbb R}}$ is a given open set and put $ {{\cal J}}_{G}=\{j:\overline{I}_j {\subset} G \}.$
Let $f_{G}(x)=\sum_{j\in {{\cal J}}_{G}}f_{j}(x).$
Then $f_{G}$ is continuous and non-negative on $ {\ensuremath {\mathbb R}}$ and clearly
$\lim_{x\to {\infty}}f(x)=0.$

We claim that
\begin{equation}\label{*DL1}
\sum_{{\lambda}\in {\Lambda}_{1}} f_{G}(x+ {\lambda})=+ {\infty} 
\end{equation}
exactly on $G$.

Indeed, if $x\in G$ then there are infinitely many $j$s  such that $x\in I_{j} {\subset} 
\overline{I}_j {\subset} G$.
This means that \eqref{*L1j} holds for infinitely many $j\in  {{\cal J}}_{G}$
and hence \eqref{*DL1} is true when $x\in G$.

Next we need to verify that \eqref{*DL1} does not hold for $x\not\in G$.
Suppose that $j_{0}\geq 10$, $j_{0}\in  {{\cal J}}_{G}$, $x\not\in G$ and
$x\in [-j_{0},j_{0}]$. Then $x\not \in \overline{I}_{j_0}$ and by \eqref{*L1jo} we have
\begin{equation}\label{*L1jo3}
\sum_{{\lambda}\in  {\Lambda}_{1,j_0}}f_{j_{0}}(x+ {\lambda})=0.
\end{equation}

Next assume that $j<j_{0}$. Then by using \eqref{*Ija} and \eqref{*Lojd}
$$\max \{x+ {\lambda}:\  {\lambda}\in  {\Lambda}_{1,j} \}\leq j_{0}+2^j+2^{-2^j}-(k_{j}-1)2^{-l_j}\leq j_{0}+2^j+2^{-2^j}+j$$
$$< 2j_{0}+2^{j_{0}-1}+1<2^{j_0}-1<2^{j_0}-2^{-2^{j_{0}}-j_{0}-1}=\overline{a}_{j_0}.$$
Hence,
\begin{equation}\label{*L1jo4}
\sum_{{\lambda}\in {\Lambda}_{1,j}}f_{j_{0}}(x+ {\lambda})=0.
\end{equation}

If $j_{0}<j$ then 
$$\min\{x+ {\lambda}:\  {\lambda}\in {\Lambda}_{1,j} \}\geq -j_{0}+2^j-j>
2^{j-1}-2j-1+2^{j-1}+1>
2^{j_0}+1>\overline{b}_{j_0},$$
and hence
in this case we also have \eqref{*L1jo4}.

Therefore, from \eqref{*L1jo3} and \eqref{*L1jo4} it follows that
\begin{equation}\label{*L1jo5}
\sum_{{\lambda}\in {\Lambda}_{1}}f_{j_{0}}(x+ {\lambda})=0 \text{   for   }j_{0}\in  {{\cal J}}_{G},\  j_{0}\geq 10,\  |x|\leq j_{0}.
\end{equation}

This implies
\begin{equation*}\label{*L1jo6}
\sum_{{\lambda}\in {\Lambda}_1}f_{G}(x+ {\lambda})\leq \sum_{\substack{{\lambda}\in {\Lambda}_{1,j} \\ j\leq \max\{10,|x| \}}}
f_{j}(x+ {\lambda})<+ {\infty}.
\end{equation*}

Since $ {\Lambda}_{1}$ is not asymptotically dense we need to choose an
asymptotically dense   $ {\Lambda}_{2}$ such that 
\begin{equation}\label{*DL2}
\sum_{{\lambda}\in {\Lambda}_{2}}\sum_{j=1}^{{\infty}}f_{j}(x+ {\lambda})<+ {\infty}\text{   holds for any   }x\in  {\ensuremath {\mathbb R}}.
\end{equation}
Then for any open $G {\subset}  {\ensuremath {\mathbb R}}$
$$
\sum_{{\lambda}\in {\Lambda}_{2}}f_G(x+ {\lambda})\leq \sum_{{\lambda}\in {\Lambda}_{2}}\sum_{j=1}^{{\infty}}f_{j}(x+ {\lambda})<+ {\infty}
$$
holds and if we let $ {\Lambda}= {\Lambda}_{1}\cup {\Lambda}_{2}$ then $ {\Lambda}$ is asymptotically dense
and $D(f_{G}, {\Lambda})=G$.

To complete the proof of this theorem we need to verify \eqref{*DL2}
for a suitable $ {\Lambda}_{2}$.
For $j\geq 10$ put
$$ {\Lambda}_{2,j}=\{k\cdot 2^{-j} :k\in {\ensuremath {\mathbb Z}} \}\cap (2^{j-1}+2(j-1),2^j+2j]
,\text{   and   } {\Lambda}_{2}= {\bigcup}_{j=10}^{{\infty}} {\Lambda}_{2,j}.$$

Suppose $x\in [-j_{0},j_{0}]$ and $j_{0}\geq 10$. Then for
$j\geq j_{0}$ from $x+ {\lambda}\in \overline{U}_j$ it follows that
$2^j-1< x+ {\lambda}\leq j+ {\lambda}$, and hence
$$ {\lambda}>2^j-j-1>2^{j-1}+2(j-1).$$
Similarly, $x+ {\lambda}\in \overline{U}_j$ implies
$2^j+1>x+ {\lambda}\geq  -j+ {\lambda}$, and hence
$$ {\lambda}<2^j+j+1<2^j+2j.$$
Thus from $x+ {\lambda}\in \overline{U}_j$ it follows that $ {\lambda}\in  {\Lambda}_{2,j}$.
Since the length of $\overline{U}_j$ is less than $2\cdot 2^{-2^j}<2^{-j}$
there is at most one $ {\lambda}\in {\Lambda}_{2,j}$ for which $f_{j}(x+ {\lambda})\not=0$
and for this $ {\lambda}$ we have $f_{j}(x+ {\lambda})=2^{-j}.$

Put $M_{x}=\max\{10,|x| \}.$
Then
$$\sum_{{\lambda}\in {\Lambda}_{2}}\sum_{j=1}^{{\infty}}f_{j}(x+ {\lambda})=\sum_{{\lambda}\in {\Lambda}_{2}}\sum_{j=1}^{M_{x}}f_{j}(x+ {\lambda})+\sum_{j=M_{x}+1}^{{\infty}}\sum_{{\lambda}\in {\Lambda}_{2}}f_{j}(x+ {\lambda})$$
$$
 \le \sum_{{\lambda}\in {\Lambda}_{2}}\sum_{j=1}^{M_{x}}f_{j}(x+ {\lambda})+\sum_{j=M_{x}+1}^{{\infty}}2^{-j}<+ {\infty}.
$$

\end{proof}

In Theorem \ref{*thdflg}   we verified that for decreasing gap asymptotically dense sets $D(f, {\Lambda})$ can contain an open set, while 
$C(f, {\Lambda})$ equals the complement of this 
open set only almost everywhere.

The next example shows that one can define decreasing gap asymptotically dense
$ {\Lambda}$s for which one can find nonnegative continuous $f$s such that both
$C(f, {\Lambda})$ and $D(f, {\Lambda})$  have interior points.

\begin{theorem}\label{*excint}
There exists a decreasing gap asymptotically dense $ {\Lambda}$ and an $f\in C^{+}_{0}( {\ensuremath {\mathbb R}})$ such that
$I_{1}=[0,1] {\subset} D(f, {\Lambda})$ and $I_{2}=[4,5] {\subset} C(f, {\Lambda})$.
\end{theorem}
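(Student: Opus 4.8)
The plan is to build $f$ as a sum of disjoint bumps placed far out, and to split $\Lambda$ into sparse \emph{filler} points (which guarantee asymptotic density with monotone gaps) together with dense \emph{special} points that force divergence on $[0,1]$. Note first that by Theorem \ref{*thcintl} the interior of $D(f,\Lambda)$ must lie to the left of the interior of $C(f,\Lambda)$; since $[0,1]$ lies to the left of $[4,5]$ the example is consistent with that constraint, and the whole construction will exploit exactly this left/right asymmetry: to force divergence at a target interval $T$ one needs $\Lambda$ dense near $a_m-\sup T$, whereas convergence at $T'$ is obtained by keeping $\Lambda$ sparse near $a_m-\inf T'$, and for \emph{decreasing} gaps density may increase only as one moves right.

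Concretely I would fix rapidly separating centres $a_1<a_2<\cdots\to\infty$ with $a_{m+1}>a_m+100$, a fast–decreasing sequence of widths $B_m\downarrow 0$, an increasing sequence $M_m\uparrow\infty$, put $s_m:=B_m/M_m$, and choose heights $h_m\downarrow 0$ so that $\sum_m h_m<\infty$ but $\sum_m h_mM_m=\infty$ (possible since $M_m\to\infty$; e.g. $M_m=4^m$, $h_m=4^{-m}/m$, $B_m=2^{-2^m}$). On each $[a_m,a_m+B_m]$ I place a continuous bump of height $h_m$ (with linear ramps on a negligible neighbourhood) and let $f$ be their sum; then $f\in C_0^+(\mathbb R)$ because the bumps are disjoint and $h_m\to 0$. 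For $\Lambda$ I put \emph{special} points filling $[a_m-1,a_m+B_m]$ with gap $s_m$, and \emph{filler} points elsewhere arranged so the global gap sequence is monotone decreasing, keeping in particular the filler gap on the window $[a_m-5,a_m-4+B_m]$ equal to $s_{m-1}\ge B_m$.

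Divergence on $[0,1]$ is then uniform: for $x\in[0,1]$ the window $[a_m-x,a_m-x+B_m]$ lies inside the special region, hence contains at least $M_m-1$ special $\lambda$ with $x+\lambda\in[a_m,a_m+B_m]$, contributing at least $(M_m-1)h_m$; summing the divergent series $\sum_m h_mM_m$ gives $s(x)=\infty$ for every $x\in[0,1]$. Convergence on $[4,5]$ I would obtain \emph{deterministically} rather than through Borel--Cantelli: for $x\in[4,5]$ a point can hit bump $m$ only if $\lambda\in[a_m-x,a_m-x+B_m]\subset[a_m-5,a_m-4+B_m]$, a window of length $B_m$ lying strictly to the left of the special region and meeting $\Lambda$ only in filler points of gap $\ge B_m$; hence at most two such $\lambda$ exist, so bump $m$ contributes at most $2h_m$. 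Since $\sum_m 2h_m<\infty$ and the separation $a_{m+1}>a_m+100$ prevents any $x+\lambda$ (with $x\le 5$) from meeting two distinct bumps, $s(x)<\infty$ for \emph{every} $x\in[4,5]$.

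The main obstacle is the monotone–gap bookkeeping: one must interleave the very dense special blocks (gap $s_m$) with filler that is still coarse enough on each $[4,5]$–window (gap $\ge B_m$), all while keeping the global gap sequence non-increasing to $0$. This forces the nested scale separation $s_{m+1}<B_{m+1}\le s_m$, so $B_m$ must shrink faster than the previous special gap; and the reduction of the gap from $s_{m-1}$ down to $s_m$ must be confined to the stretch $(a_m-4+B_m,\ a_m-1)$ lying between the $[4,5]$–window and the special region, so that the $[4,5]$–window still sees only the coarse gap $s_{m-1}$. Checking that these inequalities (plus the finitely many low-index bumps, and, if one insists on \emph{strictly} decreasing gaps, some harmless perturbations) can be met simultaneously is the only delicate point; the two counting estimates above are then entirely elementary.
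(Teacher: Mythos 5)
Your construction is correct and is essentially the paper's own: the paper likewise places bumps far out (height $2^{-2^{j+1}}$ on $[10j,10j+1]$), puts a fine grid of gap $2^{-2^{j+1}}$ on $[10j-2,10j)$, which is what $[0,1]$ sees, and a coarser grid of gap $2^{-2^j}$ on $[10j-10,10j-2)$, which is what $[4,5]$ sees, so the per-bump contribution is bounded below by a constant on $I_1$ and summable on $I_2$ --- exactly your special/filler dichotomy with tuned heights. The only differences are parametric (the paper fixes bump width $1$ and makes heights doubly exponentially small, and its grids tile $[0,\infty)$ with the gap of each fine block equal to that of the next coarse block, so the monotone-gap bookkeeping you flag as the delicate point is automatic), not conceptual.
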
 

\begin{proof}
Put $f(x)=2^{-2^{j+1}}$ if $x\in[10j,10j+1]$ for a $j\in {\ensuremath {\mathbb N}}$.
Set $f(x)=0$ if $x\in \{10j-1/4,10j+5/4 \}$ for a $j\in {\ensuremath {\mathbb N}}$,
and also put $f(x)=0$ for $x\leq 0$.
We suppose that $f$ is linear on the intervals where we have not defined it so far.
Put $ {\Lambda}_{1,j}=\{k\cdot 2^{-2^{j}} :\ k\in {\ensuremath {\mathbb Z}}\}\cap [10j-10, 10j-2)$ and
$ {\Lambda}_{2,j}=\{k\cdot 2^{-2^{j+1}} :\ k\in {\ensuremath {\mathbb Z}} \}\cap [10j-2, 10j)$. Let $\Lambda =  {\bigcup}_{j=1}^\infty (\Lambda_{1,j}\cup\Lambda_{2,j})$. Observe that $\Lambda$ is 
 a decreasing gap asymptotically dense set.

One can see that for $x\in I_{1}$ we have 
$$\sum_{{\lambda}\in {\Lambda}}f(x+ {\lambda})\geq\sum_{j=1}^{{\infty}}2^{2^{j+1}}\cdot 2^{-2^{j+1}}=+ {\infty}$$
and for $x\in I_{2}$
$$\sum_{{\lambda}\in {\Lambda}}f(x+ {\lambda})\leq\sum_{j=1}^{{\infty}}2\cdot 2^{2^j}\cdot 2^{-2^{j+1}}<+ {\infty}.$$
It is also clear from the construction that $\lim_{x\to {\infty}}f(x)=0.$
\end{proof}

Observe that in the above construction $I_{1} {\subset} D(f, {\Lambda})$ was to the left of $I_{2} {\subset} C(f, {\Lambda})$.
 The next theorem shows that for decreasing gap asymptotically dense $ {\Lambda}$s and continuous functions this situation cannot be improved. 
If $x$ is an interior point of $C(f, {\Lambda})$ then the half-line $[x, {\infty})$ intersects $D(f, {\Lambda})$ in a set of measure zero. 
As Theorem \ref{*thdivG} shows if we do not assume that $ {\Lambda}$ is of decreasing gap then
it is possible that $D(f, {\Lambda})$ has  a part of positive measure, even to the 
right of the  interior points of $C(f, {\Lambda})$.

\begin{theorem}\label{*thcintl} Let $\Lambda$ be a decreasing gap and asymptotically dense set, and let $f: {\ensuremath {\mathbb R}}\to[0,+ {\infty})$ be continuous. Then if $x$ is an interior point of $C(f,\Lambda)$ then 
\begin{equation}\label{*zhalf}
 {\mu}\Big ([x,+ {\infty})\cap D(f, {\Lambda})\Big )=0.
\end{equation}
 \end{theorem}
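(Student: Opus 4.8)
The plan is to combine two structural features. Because $f$ is continuous, every partial sum $\sum_{n\le M}f(\cdot+\lambda_n)$ is continuous, so $s$ is \emph{lower semicontinuous} (an increasing limit of continuous functions). Because the gaps $d_n$ decrease to $0$, the shifted copies $\Lambda+a$ become \emph{sparser} as $a$ grows, so the ``windowed integral'' $a\mapsto\int_a^{a+L}s$ should be non-increasing. The strategy is to extract from the hypothesis a window on which $\int s$ is actually finite, and then push that finiteness to the right by sparsification.

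First I would fix an open interval $I\ni x_0$ with $I\subset C(f,\Lambda)$, so $s<+\infty$ at every point of $I$. Pointwise finiteness alone need not give local integrability, so here I use lower semicontinuity: the sets $\{s\le k\}$ are closed and their union over $k\in\N$ is $I$, so by the Baire category theorem one of them has nonempty interior. Thus there is a subinterval $I_1=(c,d)\subset I$ and a constant $k$ with $s\le k$ on $I_1$, whence, with $L:=d-c$,
\[
\int_c^{c+L}s\,d\mu=\int_{I_1}s\,d\mu\le kL<+\infty .
\]

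Next, writing $N(t):=\#\{n:\lambda_n\le t\}$, Tonelli gives for every $a$
\[
\int_a^{a+L}s(z)\,dz=\sum_n\int_{a+\lambda_n}^{a+L+\lambda_n}f(u)\,du=\int_{\R}f(u)\big(N(u-a)-N(u-a-L)\big)\,du,
\]
so the windowed integral is $f$ weighted by the number $g(u-a):=N(u-a)-N(u-a-L)$ of points of $\Lambda$ in a window of length $L$. The monotonicity step is that the piecewise linear interpolant $\widetilde N$ of $N$ has slopes $1/d_n$ on successive gaps, and since $d_n$ decreases these slopes increase, so $\widetilde N$ is \emph{convex}; hence $\widetilde g(t)=\widetilde N(t)-\widetilde N(t-L)$ is non-decreasing for $t\ge\lambda_0+L$. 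Since $|N-\widetilde N|\le 1$ and $g(u-c)\to+\infty$ as $u\to+\infty$ (asymptotic density), for each fixed $a\ge c$ one gets $g(u-a)\le 2\,g(u-c)$ for all large $u$. Splitting the integral at that threshold, the tail is dominated by $2\int f(u)g(u-c)\,du=2\int_c^{c+L}s<+\infty$ and the remaining bounded-$u$ part is finite as well; thus $\int_a^{a+L}s<+\infty$ for every $a\ge c$. Tiling $[c,+\infty)$ by the windows $[c+jL,c+(j+1)L]$, $j\ge0$, now forces $s<+\infty$ for a.e.\ $z\ge c$, i.e.\ $\mu\big(D(f,\Lambda)\cap[c,+\infty)\big)=0$. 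Finally, if $c\le x_0$ this is \eqref{*zhalf}; if $c>x_0$ then $[x_0,c)\subset I$, where $s<+\infty$ everywhere, so $D(f,\Lambda)\cap[x_0,c)=\emptyset$, and \eqref{*zhalf} follows in either case.

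I expect the main obstacle to be making the sparsification rigorous: the windowed count $g$ is \emph{not} globally monotone (a large early gap can make it drop), so one must restrict to large arguments and absorb the $O(1)$ discrepancy between $N$ and its convex interpolant, which is precisely why the multiplicative comparison $g(u-a)\le 2g(u-c)$ (valid once the count is large) is preferable to an additive one. The other delicate point is the Baire-category extraction of a subinterval of boundedness, which is essential because $s$ being finite everywhere on $I$ does not by itself yield $\int_I s<+\infty$.
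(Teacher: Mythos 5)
Your proof is correct, and it takes a genuinely different route from the paper's. The paper argues by contradiction: given a bounded positive-measure piece $D_2$ of $D(f,\Lambda)$ inside an interval $J$ to the right of $I\subset C(f,\Lambda)$, it matches each index $i$ to an index $\gamma(i)$ with $a_J+\lambda_i-\lambda_{\gamma(i)}\in I$, proves that $i\mapsto\gamma(i)$ is injective (this is exactly where the decreasing-gap hypothesis enters there), and thereby dominates $\int_{D_2}\sum_{i=N}^{M}f(x+\lambda_i)\,dx$ by $\int_I\sum_{j=N}^{\widetilde M}f(t+\lambda_j)\,dt$; continuity of the partial sums then yields a subinterval of $I$ on which $s>1$, and iterating produces nested closed intervals $I_k$ with $s>k$ on $I_k$, hence a point of $I$ with $s=\infty$, a contradiction. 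Note that the paper never establishes $\int_I s<\infty$: its nested-interval iteration is precisely its way around the pointwise-finiteness-versus-integrability issue that you instead resolve head-on with lower semicontinuity plus Baire category. Your use of the decreasing-gap hypothesis is also different: convexity of the interpolated counting function $\widetilde N$ (so windowed counts are monotone up to a bounded error) rather than injectivity of an index matching; and your transfer goes in the opposite direction --- you push finiteness of windowed integrals of $s$ rightward via Tonelli, whereas the paper pulls divergence leftward into $I$. The two delicate points you flag are real and you handle both correctly: the multiplicative comparison $g(u-a)\le 2g(u-c)$ for large $u$ (needed because $\int f$ may be infinite, and valid since the additive discrepancy is at most $4$ while $g(u-c)\to\infty$ by asymptotic density), and the Baire extraction of a subinterval where $s$ is bounded. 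What your approach buys: a direct rather than contradiction-based proof, and the slightly stronger conclusion that $s$ is locally integrable on a right half-line, not merely finite a.e. What the paper's approach buys: it avoids Baire category, and its translation-plus-injection technique is the one that recurs elsewhere in this line of work, so it integrates more seamlessly with the rest of the literature.
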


\begin{proof} Proceeding towards a contradiction assume the existence of a 
non-degenerate closed interval $I\subset C(f,\Lambda)$. Suppose that there is a bounded subset $D_1(f,\Lambda) {\subset} D(f,\Lambda)$  with positive measure
to the right of $I$. 
Choose an interval $J=[a_{J},b_{J}]$ to the right of $I$ such that 
\begin{equation}\label{*chJ}
 {\mu}(J)= {\mu}(I)/10, \text{   and   } {\mu}(J\cap D(f, {\Lambda}))= {\alpha} >0.
\end{equation}
We put $D_1(f, {\Lambda}) =J\cap D(f, {\Lambda})$.
We suppose that $ {\Lambda}=\{{\lambda}_{1}, {\lambda}_{2},... \}$ is indexed in an increasing order.
Select $N$  such that 
\begin{equation}\label{*Nch}
\text{   $ {\lambda  }_{n }- {\lambda}_{n-1}<\frac{{\mu}(I)}{100}$ for $n\geq N$.}
\end{equation}

We clearly have that $\sum_{i=N}^{\infty}f(x+\lambda_i)$ diverges on $D_1(f,\Lambda)$.
 Moreover, if $n\in\mathbb{N}$, which is to be fixed later, for large enough $M$ we have $\sum_{i=N}^{M}f(x+\lambda_i)>n$ in a set $D_2(f,\Lambda)\subset D_1(f,\Lambda)$ of measure larger than $\frac{{\alpha}}{2}$. Hence we have
\begin{equation}\label{*Dtint}
\int_{D_2(f,\Lambda)}\sum_{i=N}^{M}f(x+\lambda_i)dx\geq \frac{n {\alpha}}{2}.
\end{equation}

Assume that  $i\in \{N,N+1,...,M\}$.  We choose $ {\gamma}(i)$ such that
\begin{equation}\label{*defjxi}
a_{J}+ {\lambda}_{i}- {\lambda}_{{\gamma}(i)}\in I, \text{   but   } a_{J}+ {\lambda}_{i}- {\lambda}_{{\gamma}(i)+1}\not\in I.
\end{equation}

Since  $a_{J}$ is to the right of
$I$ it is clear that 
$ {\lambda}_{{\gamma}(i)}> {\lambda}_{i}$, therefore
$ {\gamma}(i)>i\geq N$ and hence \eqref{*Nch} implies that
$ {\gamma}(i)$ is well-defined, that is \eqref{*defjxi} can be satisfied.

It is also clear that  there exists $\widetilde{M}$  such that $ {\gamma}(i)\leq \widetilde{M}$
holds for $i\in\{N,N+1,...,M\}$.

By \eqref{*chJ}, \eqref{*Nch},  and  \eqref{*defjxi}  we have 
\begin{equation}\label{*Jigic}
J+ {\lambda}_{i}- {\lambda}_{{\gamma}(i)} {\subset} I\text{   and hence   }D_2(f,\Lambda)+ {\lambda}_{i}- {\lambda}_{{\gamma}(i)}
 {\subset} I.
\end{equation}

Next we verify that 
\begin{equation}\label{*ivi}
\text{   if $i'\not=i$ then $ {\gamma  }(i')\not= {\gamma}(i)$. }
\end{equation}

Indeed, we can suppose that $i'<i$, and proceeding towards a contradiction
we also suppose that $ {\gamma}(i')= {\gamma}(i)$.
We know that $a_{J}+ {\lambda}_{i}- {\lambda}_{{\gamma}(i)}\in I$, moreover $a_{J}+ {\lambda}_{i'}- {\lambda}_{{\gamma}(i')}\in I$
holds as well. Since $ {\gamma}(i)= {\gamma}(i')$ we have 
$$a_{J}+ {\lambda}_{i'}- {\lambda}_{{\gamma}(i')}=a_{J}+ {\lambda}_{i}- {\lambda}_{{\gamma}(i)}- {\lambda}_{i}+ {\lambda}_{i'}\in I.$$

Using the first half of \eqref{*defjxi} and $ {\lambda}_{i'}\leq  {\lambda}_{i-1}< {\lambda}_{i}$
we also obtain
$$a_{J}+ {\lambda}_{i}- {\lambda}_{{\gamma}(i)}- {\lambda}_{i}+ {\lambda}_{i'}\leq a_{J}+ {\lambda}_{i}- {\lambda}_{{\gamma}(i)}- {\lambda}_{i}+ {\lambda}_{i-1}\in I.$$

Since $ {\Lambda}$ is of decreasing gap and $ {\gamma}(i)>i$ we have $ {\lambda}_{{\gamma}(i) +1}- {\lambda}_{{\gamma}(i)}<
 {\lambda}_{i}- {\lambda}_{i-1}$, and hence
$$a_{J}+ {\lambda}_{i}- {\lambda}_{{\gamma}(i)}- {\lambda}_{i}+ {\lambda}_{i-1}< a_{J}+ {\lambda}_{i}- {\lambda}_{{\gamma}(i)}- {\lambda}_{{\gamma}(i)+1}+ {\lambda}_{{\gamma}(i)}\in I,$$
which contradicts \eqref{*defjxi}.

By using \eqref{*Jigic} and  \eqref{*ivi} we infer 
\begin{equation}\label{*sumint}
\int_{D_2(f,\Lambda)}  \sum_{i=N}^{M} f(x+ {\lambda}_{i})dx=  \sum_{i=N}^{M} \int_{D_2(f,\Lambda)}
 f(x+ {\lambda}_{i}- {\lambda}_{{\gamma}(i)}+ {\lambda}_{{\gamma}(i)})dx
\end{equation}
$$=  \sum_{i=N}^{M} \int_{D_2(f,\Lambda)+ {\lambda}_{i}- {\lambda}_{{\gamma}(i)}}  f(t+ {\lambda}_{{\gamma}(i)})dt\leq
\int_{I}\sum_{j=N}^{\widetilde{M}}f(t+ {\lambda}_{j})dt.$$

 Thus by \eqref{*Dtint} we obtain
\begin{displaymath}
\int_{I}\sum_{i=N}^{\widetilde{M}}f(x+\lambda_i)dx\geq \frac{n {\alpha}}{2},
\end{displaymath}
as the left-handside by \eqref{*sumint} gives an upper bound for 
the integral in \eqref{*Dtint}. 
However, $\sum_{i=N}^{\widetilde{M}}f(x+\lambda_i)$ is continuous, 
which yields that this integrand is at least $\frac{n {\alpha}}{4\mu(I)}$ in a 
non-degenerate
closed subinterval $I_1\subset{I}$. 
Thus we have $s(x)=\sum_{\lambda\in\Lambda}f(x+\lambda)>\frac{n {\alpha}}{4\mu(I)}$ in $I_1$. 
Hence, if we choose $n$ to be large enough, we find that $s(x)>1$ in $I_1$. 

Now by applying the very same argument to $I_1$ instead of $I$, we might obtain that $s(x)> \frac{n_1 {\alpha}}{4\mu(I_1)}$ in a non-degenerate
 closed subinterval $I_2\subset{I_1}$. Thus if we choose $n_1$ to be large enough, we find that $s(x)>2$ in $I_2$. Proceeding recursively we obtain a nested sequence of closed intervals $I_1,I_2,...$ such that $s(x)>k$ for $x\in I_k$. As this system of intervals has a nonempty intersection, we find that there is a point in $I$ with $s(x)=\infty$, a contradiction. \end{proof}

\section{Acknowledgements}

During the Fall semester of 2018, when this paper was prepared all three authors visited 
the Institut Mittag-Leffler in Djursholm and participated in the 
semester Fractal Geometry and Dynamics. We thank the hospitality and financial support of the Institut Mittag-Leffler.
Z. Buczolich also thanks the R\'enyi Institute where he was
a visiting researcher for the academic year 2017-18.



\begin{thebibliography}{99}


\bibitem{[BKM1]} Z. Buczolich, J-P. Kahane and R. D. Mauldin, 
On series of translates of positive functions, Acta Math. Hungar., 
{\bf 93}(3) (2001), 171-188.


\bibitem{[BKM2]} Z. Buczolich, J-P. Kahane, and R.~D. Mauldin,
 Sur les s\'eries de translat\'ees de fonctions positives.
 {\em C. R. Acad. Sci. Paris S\'er. I Math.}, {\bf 329}(4):261--264, 1999.

\bibitem{[BM1]} Z. Buczolich and R.~D. Mauldin,
\newblock On the convergence of {$\sum^\infty_{n=1}f(nx)$} for measurable
  functions.
\newblock {\em Mathematika}, {\bf 46}(2):337--341, 1999.


\bibitem{[BM2]} Z. Buczolich and R. D. Mauldin,
On series of translates of positive functions II., 
Indag. Mathem., N. S., {\bf 12} (3), (2001), 317-327.\medskip



\bibitem{[H1]} J.A. Haight, A linear set of infinite measure with no two points
having integral ratio, {\it Mathematika} {\bf 17}(1970), 133-138.

\bibitem{[H2]} J.A. Haight, A set of infinite measure whose ratio set does not
contain a given sequence, {\it Mathematika} {\bf 22}(1975), 195-201.

\bibitem{[L]} C. G. Lekkerkerker, Lattice points in unbounded point sets,
I. {\it Indag. Math.,} {\bf 20} (1958) 197-205.\medskip


\bibitem{[HW]} H. \! v. \!\! Weizs\"acker, {\it Zum Konvergenzverhalten der Reihe
$\Sigma_{n=1}^{\infty}f(nt)$ f\"ur $\lambda$-messbare Funktionen
$f: {\ensuremath {\mathbb R}}^+\to  {\ensuremath {\mathbb R}}^+$,} Diplomarbeit, Universit\"at M\"unchen, 1970.




\end{thebibliography}
\end{document}